\documentclass[12pt]{amsart}
\usepackage{a4wide,enumerate,xcolor}
\usepackage{amsmath,comment,graphicx}
\allowdisplaybreaks
\usepackage{enumitem}
\setlist[enumerate]{label={\rm(\arabic*)}, leftmargin=28pt, itemsep=3pt}
\setlist[itemize]{label={$\bullet$}, leftmargin=12pt, itemsep=3pt}

\let\pa\partial
\let\na\nabla
\let\eps\varepsilon

\newcommand{\R}{{\mathbb R}}
\newcommand{\diver}{\operatorname{div}}

\newcommand{\dom}{\mathcal{D}}

\newtheorem{theorem}{Theorem}
\newtheorem{lemma}[theorem]{Lemma}
\newtheorem{proposition}[theorem]{Proposition}
\newtheorem{remark}[theorem]{Remark}

\newtheorem{example}{Example}

\begin{document}

\title[Multiphase cross-diffusion models]{Multiphase cross-diffusion 
models for \\ tissue structures: modeling, analysis, numerics}

\author[A. J\"ungel]{Ansgar J\"ungel}
\address{Institute of Analysis and Scientific Computing, TU Wien, Wiedner Hauptstra\ss e 8--10, 1040 Wien, Austria}
\email{juengel@tuwien.ac.at} 

\author[C. Reisch]{Cordula Reisch}
\address{Department of Science and Environment, Roskilde University, Universitetsvej 1, 4000 Roskilde, Denmark}
\email{cordular@ruc.dk}

\author[S. Xhahysa]{Sara Xhahysa}
\address{Institute of Analysis and Scientific Computing, TU Wien, Wiedner Hauptstra\ss e 8--10, 1040 Wien, Austria}
\email{sara.xhahysa@tuwien.ac.at} 

\date{\today}

\thanks{The first and last authors acknowledge partial support from   
the Austrian Science Fund (FWF), grant 10.55776/F65 and 10.55776/PAT2687825, and from the Austrian Federal Ministry for Women, Science and Research and implemented by \"OAD, project MultHeFlo. This work has received funding from the European Research Council (ERC) under the European Union's Horizon 2020 research and innovation programme, ERC Advanced Grant NEUROMORPH, no.~101018153. For open access purposes, the authors have applied a CC BY public copyright license to any author-accepted manuscript version arising from this submission.} 

\begin{abstract}
Volume-filling cross-diffusion equations for the components of a tissue structure are formally derived from mass conservation laws and force balances for the interphase pressures and viscous drag forces in a multiphase approach. The equations include Maxwell--Stefan, tumor-growth, thin-film solar cell models as well as novel volume-filling population systems. The Boltzmann and Rao entropy structures are explored. If the drag coefficients are all equal to one, the global-in-time existence of bounded weak solutions, their long-time behavior, and the weak--strong uniqueness of solutions to a regularized system are proved using entropy methods. In the general case, the resulting diffusion matrix is positively stable, ensuring local-in-time existence of solutions. Global-in-time existence of weak solutions is proved if the drag coefficients are sufficiently close to each other. This restriction is explained by the fact that the pressure forces are of degenerate type, while the drag forces are nondegenerate in the volume fractions. Numerical simulations are presented in one space dimension to illustrate the solution behavior beyond the entropy regime.
\end{abstract}

\keywords{Multiphase modeling, cross-diffusion systems, entropy method, existence of bounded weak solutions, exponential decay, weak--strong uniqueness, positive stability, finite-volume method.}  
 
\subjclass[2000]{35B40, 35K51, 35K59, 35Q92, 76T30, 92C37.}

\maketitle


\section{Introduction}

Multiphase models provide a continuum framework to investigate interactions between different components of a cellular fluid. Multiphase characterizes situations where several different phases (e.g.\ cells, water, extracellular matrix) are flowing simultaneously. The phases are present at every material point and momentum and mass balance equations are postulated. Assuming force balances for the interphase pressures and viscous drag forces like in \cite{LKBJS06}, the evolution of the volume fractions of the phases is governed by cross-diffusion equations. In this paper, we derive formally a novel class of cross-diffusion systems, analyze the cases when the drag coefficients are all the same or not, and present some numerical experiments. 

\subsection{Model setting}

The equations for the volume fraction $u_i(x,t)\in[0,1]$ and velocity $v_i(x,t)\in\R^d$ of the $i$th phase are given by 
\begin{align}
  & \pa_t u_i + \diver(u_iv_i) = 0\quad\mbox{in }\Omega,\ 
  t>0,\ i=0,\ldots,n, \label{1.mass} \\
  & \sum_{j=0}^n k_{ij}u_iu_j(v_i-v_j) 
  = -\bigg(\na(u_iq_i(u)) - u_i\sum_{j=0}^n\na(u_jq_j(u))\bigg) \label{1.mom} \\
  &\phantom{\sum}+ \bigg((r_i(u)\na u_i-u_i\na r_i(u))
  - u_i\sum_{j=0}^n\big(r_j(u)\na u_j-u_j\na r_j(u)\big)\bigg), 
  \nonumber 
\end{align}
where $u=(u_1,\ldots,u_n)$ are the phase components, $u_0$ is the solvent (typically, water), and
\begin{align}\label{1.qr}
  q_i(u) = \sum_{j=0}^n q_{ij}u_j, \quad
  r_i(u) = \sum_{j=0}^n r_{ij}u_j
\end{align}
are the intraphase and interphase pressures, respectively; see Section \ref{sec.model} for details. The equations are solved in a bounded domain $\Omega\subset\R^d$ ($d\ge 1$). The drag coefficients $k_{ij}>0$ and pressure coefficients $r_{ij}\ge 0$ are symmetric; the pressure coefficients $q_{ij}\ge 0$ may be functions of $u$, and they may be not symmetric. Reaction rates can be added as source terms to the right-hand side of \eqref{1.mass}, but we neglect them to simplify the presentation. We impose the initial and no-flux boundary conditions
\begin{align}\label{1.bic}
  u_i(0)=u_i^0\quad\mbox{in }\Omega, \quad
  u_iv_i\cdot\nu = 0 \quad\mbox{on }\pa\Omega,\ t>0,\ i=0,\ldots,n.
\end{align}
Typically, $u_1,\ldots,u_n$ are the volume fractions of cell phases (like tissue cells, tumor cells, necrotic cells, extracellular matrix, etc.) and $u_0$ is the liquid phase (water). 

To ensure total mass conservation, we assume that the total flux vanishes, $\sum_{i=0}^n u_iv_i=0$. Then, supposing that the initial total mass equals $\sum_{i=0}^n u_i(0)=1$, equations \eqref{1.mass} imply that $\sum_{i=0}^n u_i(t)=1$ for all $t>0$. Biologically, this means that the mixture is saturated. We call this property volume filling. As a consequence, we can replace the liquid phase by $u_0=1-\sum_{i=1}^n u_i$, and equations \eqref{1.mass}--\eqref{1.mom} can be formulated in the variable $u=(u_1,\ldots,u_n)$.  

Equations \eqref{1.mass}--\eqref{1.mom} form a cross-diffusion system. To see this, let first $k_{ij}=1$ for all $i,j=0,\ldots,n$, $i\neq j$. Then the left-hand side of \eqref{1.mom} equals $u_iv_i$ and inserting the right-hand side into \eqref{1.mass} yields
\begin{align}\label{1.samek}
  \pa_t u_i &= \diver\bigg(\na(u_iq_i(u)) 
  - u_i\sum_{j=0}^n\na(u_jq_j(u))\bigg) \\
  &\phantom{xx}- \diver\bigg(r_i(u)\na u_i-u_i\na r_i(u)
  - u_i\sum_{j=0}^n\big(r_j(u)\na u_j-u_j\na r_j(u)\big)\bigg),
  \nonumber 
\end{align}
where $i=0,\ldots,n$. After replacing $u_0$, these equations can be written compactly as the cross-diffusion system
\begin{align*}
  \pa_t u = \diver(A(u)\na u)\quad\mbox{in }\Omega,\ t>0,
\end{align*}
where $A(u)$ is an $n\times n$ matrix (whose entries are defined in \eqref{4.A} below). If $k_{ij}\neq 1$ are symmetric but not all equal, the left-hand side is a linear system in the fluxes $u_iv_i$ for $i=0,\ldots,n$. As the sum of the left-hand side of \eqref{1.mom} over $i=0,\ldots,n$ vanishes (since $k_{ij}$ is symmetric), the linear system cannot be solved uniquely. This issue can be overcome by either using the Bott--Duffin inverse (as in \cite{HJT22}) or by removing the equation for $u_0$ and solving the linear system uniquely in $u_iv_i$ for $i=1,\ldots,n$ (as in \cite{JuSt13}). In the latter case, we can write the left-hand side of \eqref{1.mom} as $(K(u)J)_i$, where $K(u)\in\R^{n\times n}$ is defined in \eqref{5.K} below and $J=(J_1,\ldots,J_n)$ with $J_i=u_iv_i$ being the flux of the $i$th phase. The matrix $K(u)$ is invertible, and we can write \eqref{1.mass}--\eqref{1.mom} compactly as the cross-diffusion system
\begin{align}\label{1.K1A} 
  \pa_t u = \diver(K(u)^{-1}A(u)\na u)\quad\mbox{in }\Omega,\ t>0.
\end{align}
Notice that the diffusion matrices $A(u)$ and $K(u)^{-1}A(u)$ are generally neither symmetric nor positive (semi-) definite such that classical PDE tools cannot be applied. We overcome this issue by revealing and exploiting the entropy structure associated to \eqref{1.mass}--\eqref{1.mom}; see Section \ref{sec.ent}.

Equations \eqref{1.mass}--\eqref{1.mom} include several known and new models:
\begin{itemize}
\item Maxwell--Stefan equations: If $q_i(u)=1$ and $r_i(u)=0$, we recover the equations
\begin{align*}
  \pa_t u_i + \diver(u_iv_i) = 0, \quad
  -\na u_i = \sum_{j=0}^n k_{ij}u_iu_j(v_i-v_j), \quad i=0,\ldots,n.
\end{align*}
They describe the dynamics of gaseous mixtures and arise in many fields like dialysis, respiratory airways, electrolysis, and chemical reactors \cite{WeKr00}. The local (in time) existence of solutions to the Maxwell--Stefan equations was proved in \cite{Bot11,GoMa98}, the global (in time) existence of weak solutions in \cite{JuSt13}. Notice that the differential operator of the Maxwell--Stefan equations is nondegenerate, i.e., it contains Laplacian terms. 
\item Thin-film solar cell equations: We choose $k_{ij}=1$ and $q_i(u)=0$. Then equations \eqref{1.samek} become
\begin{align*}
  \pa_t u_i = \diver\bigg(\sum_{j=0}^n r_{ij}
  (u_j\na u_i-u_i\na u_j)\bigg), \quad i=0,\ldots,n.
\end{align*}
They arise in the production of solar cells by physical vapor deposition. In this process, some material components with volume fraction $u_i$ are evaporated in a high-temperature vacuum chamber. The incoming atoms deposit on the substrate and form a thin-film layer, which is used as the absorption layer of the solar cell. The equations have been suggested and analyzed in \cite{BaEh18}. 
\item Volume-filling population equations: Setting $k_{ij}=1$ and $r_i(u)=0$, we obtain the equations
\begin{align}\label{1.vfSKT}
  \pa_t u_i = \diver\bigg(\na(u_iq_i(u)) 
  - u_i\sum_{j=0}^n\na(u_jq_j(u))\bigg), \quad i=0,\ldots,n.
\end{align}
Scaling $\tau=t/\delta$, $w_i=\delta u_i$ for $\delta>0$ and performing the formal limit $\delta\to 0$ leads to the Shigesada--Kawasaki--Teramoto (SKT) population model $\pa_{\tau}w_i=\Delta(w_iq_i(w))$ \cite{SKT79}. In this sense, equations \eqref{1.vfSKT} are a version of the SKT model yielding bounded solutions (since $\sum_{i=0}^n u_i=1$). The equations contain quadratic degenerate expressions of the type $\Delta(u_i u_j)$. 
Note that the boundedness of solutions to the SKT system can be proved only for special values of $q_{ij}$ \cite{JuZa16}, which justifies the study of the variant \eqref{1.vfSKT} of the SKT model. This variant is new.
\item Volume-filling Busenberg--Travis equations: We choose $k_{ij}=1$ and $q_{ij}=r_{ij}$, leading to $q_i(u)=r_i(u)$ and hence
\begin{align}\label{1.vfBT}
  \pa_t u_i = 2\diver\bigg(u_i\na q_i(u) 
  - u_i\sum_{j=0}^n u_j\na q_j(u)\bigg), \quad i=0,\ldots,n.
\end{align}
With the same scaling and asymptotic limit as in the previous example, we recover the generalized Busenberg--Travis equations $\pa_\tau w_i=\diver(w_i\na q_i(w))$ \cite{BuTr83}. Bounded weak solutions to this model are known to exist for $n=2$ only \cite{LaMa23}. Equations \eqref{1.vfBT} yield bounded weak solutions for any $n\ge 2$; the model is new.
\item Tumor-growth cross-diffusion equations: We choose $n=2$, $k_{ij}=1$, $r_{ij}=0$, and the intraphase pressures
\begin{align*}
  q_{11} = \beta, \quad q_{12} = 0, \quad q_{21} = \beta\theta u_2, 
  \quad q_{22} = \beta,
\end{align*}
where $\beta>0$, $\theta>0$ are some constant parameters. The equations become
\begin{equation}\label{1.tg}
\begin{aligned}
  \pa_t u_1 &= 2\beta\diver\big[\big(u_1(1-u_1)-\theta u_1u_2^2\big)
  \na u_1 - u_1u_2(1+\theta u_1)\na u_2\big], \\
  \pa_t u_2 &= 2\beta\diver\big[\big(-u_1u_2+\theta(1-u_2)u_2^2\big)
  \na u_1 + u_2(1-u_2)(1+\theta u_1)\na u_2\big].
\end{aligned}
\end{equation}
This model was formally derived in \cite{JaBy02} and analyzed in \cite{JuSt12}. In this example, the pressure coefficient $q_{21}$ depends on the volume fraction $u_2$, which models the extracellular matrix.
\end{itemize}

In the following, we present our main results. We consider equal drag coefficients $k_{ij}=1$ and unequal coefficients $k_{ij}$ separately. 


\subsection{Equal drag coefficients}

We prove the existence of global solutions, their long-time behavior, and, under additional conditions, a weak--strong uniqueness property. We impose the following general assumptions:
\begin{enumerate}
\item[(A1)] Domain: $\Omega\subset\R^d$ ($d\ge 1$) is a bounded domain with Lipschitz boundary and let $T>0$. Set $\Omega_T=\Omega\times(0,T)$ and $\dom:=\{u\in(0,1)^n:\sum_{i=1}^n u_i<1\}$.
\item[(A2)] Initial data: $u^0=(u_1^0,\ldots,u_n^0)\in L^1(\Omega;\R^n)$ satisfies $u^0(x)\in\overline{\dom}$ for a.e.\ $x\in\Omega$. 
\item[(A3)] Drag coefficients: $k_{ij}=1$ for $i\neq j$ and $k_{ii}=0$ for $i=0,\ldots,n$. 
\item[(A4)] Pressure coefficients: $q_{ij}\ge 0$, $r_{ij}=r_{ji}\ge 0$,  $q_{i0}=q_{0i}=r_{i0}=r_{ii}=0$ for $i,j=0,\ldots,n$.
\end{enumerate}

As discussed earlier, the symmetry of $r_{ij}$ (and $k_{ij}$) is required to ensure that the total force vanishes. We set $k_{ii}=r_{ii}=0$ to allow for sums going from $i=0,\ldots,n$. The assumption $q_{i0}=r_{i0}=0$ is made for convenience; it simplifies the reduction from $n+1$ to $n$ species. Biologically, this means that the water phase does not contribute to the pressure. 

In the following, we call a possibly nonsymmetric matrix $A$ positive definite if its symmetric part $\frac12(A^T+A)$ is positive definite. We start with the global existence result.

\begin{theorem}[Global existence of solutions]\label{thm.ex}
Let Assumptions (A1)--(A4) hold, the matrix $(q_{ij}+r_{ij})_{i,j=1}^n$ be positive definite with smallest eigenvalue $\alpha>0$, and $q_{ij}\ge r_{ij}$ for $i,j=1,\ldots,n$. Then there exists a global bounded weak solution $u$ to \eqref{1.qr}--\eqref{1.samek} satisfying $u(x,t)\in\overline{\dom}$ for a.e.\ $(x,t)\in\Omega_T$ and
\begin{align*}
  u\in L^\infty(\Omega_T;\R^n)\cap L^2(0,T;H^1(\Omega;\R^n)), \quad
  \pa_t u\in L^2(0,T;H^1(\Omega;\R^n)'). 
\end{align*}
Moreover, the entropy inequality holds for $0<t<T$:
\begin{align}\label{1.ent}
  \int_\Omega h_B(u(t))dx + \alpha\sum_{i=0}^n\int_0^t\int_\Omega
  |\na u_i|^2 dxds \le \int_\Omega h_B(u^0)dx,
\end{align}
where $h_B(u)=\sum_{i=0}^n u_i(\log u_i-1)$ and $u_0=1-\sum_{i=1}^n u_i$.
\end{theorem}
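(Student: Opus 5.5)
The proof follows the boundedness-by-entropy method. I work in the reduced variables $u=(u_1,\ldots,u_n)$, $u_0=1-\sum_{i=1}^n u_i$, with the Boltzmann entropy $h_B$ and the entropy variables $w_i=\pa h_B/\pa u_i=\log(u_i/u_0)$, $i=1,\ldots,n$. The map $h_B':\dom\to\R^n$ is invertible, with inverse $u_i(w)=e^{w_i}/(1+\sum_{j=1}^n e^{w_j})$. Hence any solution expressed through $w$ lies in $\dom$ automatically, and the $L^\infty$ bound comes for free. Writing \eqref{1.samek} as $\pa_t u=\diver(A(u)\na u)$, the key algebraic step is to compute the entropy production $\int_\Omega \na w:A(u)\na u\,dx$.

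\textbf{Entropy identity.} Testing equation $i$ of \eqref{1.samek} (for $i=0,\ldots,n$) with $\log u_i$ and summing, the terms multiplied by $u_i\sum_j(\cdots)$ cancel. This is because $\sum_{i=0}^n u_i\na\log u_i=\na\sum_{i=0}^n u_i=0$. For constant $q_{ij}$, what remains is
\begin{align*}
  \frac{d}{dt}\int_\Omega h_B(u)dx
  &= -\sum_{i=0}^n\int_\Omega\Big(\na(u_iq_i)-r_i\na u_i+u_i\na r_i\Big)\cdot\frac{\na u_i}{u_i}dx \\
  &= -\sum_{i=0}^n\int_\Omega (q_i-r_i)\frac{|\na u_i|^2}{u_i}dx
  - \sum_{i,j=1}^n\int_\Omega (q_{ij}+r_{ij})\na u_i\cdot\na u_j\,dx.
\end{align*}
Here (A4) is used to drop the $i=0$ and $j=0$ contributions in the second sum. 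Since $q_i(u)-r_i(u)=\sum_j(q_{ij}-r_{ij})u_j\ge0$ on $\overline{\dom}$, the first term is nonpositive. Positive definiteness of the matrix $(q_{ij}+r_{ij})$ bounds the second term by $-\alpha\sum_{i=1}^n\|\na u_i\|_{L^2}^2$. The $|\na u_0|^2$ contribution is controlled via $|\na u_0|^2\le n\sum_{i\ge1}|\na u_i|^2$; this yields \eqref{1.ent}, possibly after adjusting the constant in front of the gradient term. If $q_{ij}$ depends on $u$, I expect the extra terms $u_iu_j\na q_{ij}\cdot\na u_i/u_i$ to have to be absorbed. I would check this at the formal level first.

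\textbf{Approximate problem.} I discretize in time by the implicit Euler scheme with step $\tau$. I regularize in the entropy variable by adding $\eps\big(\sum_{|\beta|=m}D^\beta w\cdot D^\beta\phi+w\cdot\phi\big)$ with $m>d/2$, so that $H^m(\Omega)\hookrightarrow L^\infty(\Omega)$. Given $u^{k-1}$, I solve for $w^k\in H^m(\Omega;\R^n)$ by a Leray--Schauder fixed-point argument. The map is: linearize with $u(\bar w)$, solve by Lax--Milgram, and then use compactness of $H^m\hookrightarrow L^\infty$. The required uniform bound on fixed points comes from the discrete version of the entropy inequality, obtained via convexity $h_B(u^k)-h_B(u^{k-1})\le w^k\cdot(u^k-u^{k-1})$. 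Summing over $k$ gives bounds, uniform in $(\tau,\eps)$, on $u^{(\tau)}$ in $L^\infty\cap L^2(0,T;H^1)$, on $\sqrt\eps w^{(\tau)}$ in $L^2(0,T;H^m)$, and (from the equation, with bounded coefficients) on the discrete time derivative in $L^2(0,T;(H^m)')$.

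\textbf{Limit.} The discrete Aubin--Lions lemma gives strong convergence $u^{(\tau)}\to u$ in $L^2(\Omega_T)$, and hence a.e. convergence with $u\in\overline{\dom}$, together with $\na u^{(\tau)}\rightharpoonup\na u$ weakly in $L^2$. The entries of $A(u)$ are polynomials in $u$ and bounded on $\overline{\dom}$, so $A(u^{(\tau)})\na u^{(\tau)}\rightharpoonup A(u)\na u$. The $\eps$-terms vanish since $\eps\|w\|_{H^m}\le\sqrt\eps\,C$. The time-derivative bound then improves to $L^2(0,T;H^1(\Omega)')$ by density. Finally, the entropy inequality passes to the limit by weak lower semicontinuity of the norms and Fatou's lemma on $h_B$.

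I expect the main obstacle to be the entropy computation itself: verifying exactly that all cross terms organize into $(q_i-r_i)|\na u_i|^2/u_i$ plus the quadratic form $(q_{ij}+r_{ij})$ in the reduced $n$ variables, including the $u_0$-terms, so that the sign assumption $q_{ij}\ge r_{ij}$ and the eigenvalue $\alpha$ suffice. The remaining steps are the standard boundedness-by-entropy machinery.
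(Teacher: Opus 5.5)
Your proposal is correct and follows the paper's route. The paper simply invokes \cite[Theorem 2]{Jue15}, whose key hypothesis is the uniform bound $z^Th_B''(u)A(u)z\ge\alpha|z|^2$ obtained from $\big(h_B''(u)A(u)\big)_{ij}=\delta_{ij}(q_i(u)-r_i(u))/u_i+q_{ij}+r_{ij}$ (Lemma \ref{lem.hA}); this is exactly the pointwise content of your entropy identity, and you additionally re-run the time discretization, $H^m$ regularization, Leray--Schauder and Aubin--Lions steps by hand.

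Your caveat about the constant is warranted. Both your argument and the paper's only control $\alpha\sum_{i=1}^n\|\na u_i\|_{L^2}^2$, and hence $\frac{\alpha}{n+1}\sum_{i=0}^n\|\na u_i\|_{L^2}^2$, which is the factor the paper itself uses via \eqref{3.n} in Section \ref{sec.decay}. So \eqref{1.ent} is really obtained with $\alpha/(n+1)$ in place of $\alpha$.
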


The theorem is proved by applying the boundedness-by-entropy method \cite{Jue15}. The idea is to derive a priori estimates from the Boltzmann entropy density $h_B(u)$, leading to the entropy inequality \eqref{1.ent}. This is possible since $h_B''(u)A(u)$ is positive definite uniformly in $\dom$. The pointwise bound $u(x,t)\in\overline{\dom}$ is also a consequence of the Boltzmann entropy. Indeed, the idea is to solve an approximation of the cross-diffusion equations in terms of the variable $w=(w_1,\ldots,w_n)=h_B'(u)$. Then the volume fraction $u_i$ is a function of $w$ via
\begin{align*}
  u_i = \frac{\exp w_i}{1 + \sum_{j=1}^n \exp w_j}, \quad i=1,\ldots,n,
\end{align*}
which shows that $u\in\dom$; see Section \ref{sec.ex}. We have formulated Theorem \ref{thm.ex} for constant coefficients $q_{ij}$ and $r_{ij}$ for simplicity, but in fact, they are allowed to depend on $u$; see Remark \ref{rem.qr}. The main hypothesis is that $(q_{ij}+r_{ij})_{i,j=1}^n$ be positive definite uniformly in $\dom$. This is important for instance in the tumor-growth model \eqref{1.tg}, where the coefficient $q_{21}$ depends on $u_2$; see Section \ref{sec.tg}. 

If $(q_{ij}+r_{ij})_{i,j=1}^n$ is not positive definite, we cannot expect even local existence of solutions. Indeed, consider the tumor-growth model \eqref{1.tg}. We show in Section \ref{sec.tg} that for sufficiently large $\theta>0$, the diffusion matrix $A(u)$ possesses negative eigenvalues for certain values of $u\in\dom$, meaning that the system is generally {\em not} parabolic in the sense of Petrovskii, which is considered to be a minimal condition for (local) solvability \cite{Ama90}. Notice that if $\theta<4/\sqrt{\beta}$ then the existence of global weak solutions can be shown by the entropy method \cite{JuSt12}.

Next, we show that the solution $u_i(t)$ converges exponentially fast to the constant steady state $u_i^\infty = |\Omega|^{-1}\int_\Omega u_i^0 dx$ for $i=0,\ldots,n$. Set $u^\infty=(u_1^\infty,\ldots,u_n^\infty)$. 

\begin{theorem}[Exponential decay]\label{thm.decay}
Let the assumptions of Theorem \ref{thm.ex} hold, let $u$ be a bound\-ed weak solution to \eqref{1.qr}--\eqref{1.samek}, and assume that $u^\infty\in\dom$. Then there exists $\lambda>0$, depending on $u^\infty$, $d$, $\Omega$, and the smallest eigenvalue of $(q_{ij}+r_{ij})_{i,j=1}^n$ such that
\begin{align}\label{1.decay}
  \|u(t)-u^\infty\|_{L^1(\Omega)} 
  \le \sqrt{2\|u^0\|_{L^1(\Omega)}H_B(u^0|u^\infty)}
  e^{-\lambda t} \quad\mbox{for }t>0,
\end{align}
where
\begin{align}\label{1.HB}
  H_B(u|u^\infty) = \sum_{i=0}^n\int_\Omega 
  u_i\log\frac{u_i}{u_i^\infty}dx
\end{align}
is the relative Boltzmann entropy.
\end{theorem}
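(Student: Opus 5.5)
The plan is to use the relative Boltzmann entropy $H_B(u|u^\infty)$ as a Lyapunov functional, derive an entropy--entropy production inequality from the entropy inequality \eqref{1.ent}, and close it with a logarithmic Sobolev inequality.

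\textbf{Step 1: the relative entropy satisfies the same dissipation inequality.} Since $\sum_{i=0}^n u_i=1$ and mass is conserved, $\int_\Omega u_i(t)dx=|\Omega|u_i^\infty$ for every $i$. Hence
\begin{align*}
  H_B(u|u^\infty)=\int_\Omega h_B(u)dx+\text{const},
\end{align*}
because $\sum_i\int_\Omega u_i\log u_i^\infty dx$ and $\sum_i\int_\Omega u_i dx$ are time independent. The computation behind \eqref{1.ent} holds on every time interval $(s,t)$; in the approximation scheme it holds for the discrete solutions and passes to the limit. This gives, for $0\le s<t$,
\begin{align*}
  H_B(u(t)|u^\infty)+\alpha\sum_{i=0}^n\int_s^t\int_\Omega|\na u_i|^2dxd\sigma\le H_B(u(s)|u^\infty).
\end{align*}
If the weak solution is only assumed to satisfy \eqref{1.ent} from time $0$, I would work with the solution from Theorem \ref{thm.ex}, or argue at the level of the time-discrete approximations and pass to the limit at the end.

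\textbf{Step 2: entropy production controls the entropy.} This is the main obstacle, because the dissipation involves $|\na u_i|^2$ rather than the Fisher information $|\na\sqrt{u_i}|^2$. Since $0\le u_i\le1$, we have
\begin{align*}
  |\na u_i|^2=4u_i|\na\sqrt{u_i}|^2,
\end{align*}
and the factor $u_i$ is not bounded below, so the log-Sobolev inequality cannot be applied directly. Instead I would use the elementary inequality
\begin{align*}
  a\log\frac ab-(a-b)\le\frac{(a-b)^2}{b},
\end{align*}
which gives
\begin{align*}
  H_B(u|u^\infty)\le\sum_{i=0}^n\frac1{u_i^\infty}\|u_i-u_i^\infty\|_{L^2(\Omega)}^2,
\end{align*}
using $\sum_i\int_\Omega(u_i-u_i^\infty)dx=0$. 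The Poincar\'e--Wirtinger inequality then yields
\begin{align*}
  H_B(u|u^\infty)\le\frac{C_P}{\min_i u_i^\infty}\sum_{i=0}^n\|\na u_i\|_{L^2}^2.
\end{align*}
The assumption $u^\infty\in\dom$ is exactly what guarantees $u_i^\infty>0$ for all $i=0,\ldots,n$. So the dissipation dominates $2\lambda H_B$ with
\begin{align*}
  2\lambda=\frac{\alpha\min_i u_i^\infty}{C_P},
\end{align*}
which depends on $u^\infty$, $\Omega$, $d$ and $\alpha$, as claimed.

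\textbf{Step 3: Gronwall and Csisz\'ar--Kullback--Pinsker.} From the inequality of Step 1 combined with Step 2, $H_B(u(t)|u^\infty)\le H_B(u(s)|u^\infty)-2\lambda\int_s^tH_B\,d\sigma$ holds for a.e.\ $s<t$. A Gronwall argument for this integral form (valid for functions that are nonincreasing and satisfy the integral inequality) gives
\begin{align*}
  H_B(u(t)|u^\infty)\le e^{-2\lambda t}H_B(u^0|u^\infty).
\end{align*}
Finally, the Csisz\'ar--Kullback--Pinsker inequality applies to the measures $u_i\,dx$ summed over $i$, with total mass $\|u^0\|_{L^1}$ in the appropriate normalization ($\sum_i\int u_i=|\Omega|$). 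It gives
\begin{align*}
  \|u-u^\infty\|_{L^1}^2\le2\|u^0\|_{L^1}H_B(u|u^\infty),
\end{align*}
and taking square roots yields \eqref{1.decay} with rate $\lambda$.
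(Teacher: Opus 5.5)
Your argument is correct, but it closes the entropy--entropy-production inequality by a different and more elementary route than the paper. The paper differentiates $H_B(u|u^\infty)$ by a formal chain rule and bounds the production from below by $\alpha\sum_{i=1}^n\|\na u_i\|_{L^2}^2$ using Lemma~\ref{lem.hA}. It passes to $\sum_{i=0}^n$ with \eqref{3.n}, and then imports the bound $\int_\Omega u_i\log(u_i/u_i^\infty)dx\le (C_S/u_i^\infty)\int_\Omega|\na u_i|^2dx$ from the log-Sobolev result \cite{AbLe25}. You prove exactly this per-species bound yourself, with $C_S=C_P$. You use the pointwise estimate $a\log(a/b)-(a-b)\le(a-b)^2/b$ (relative entropy is dominated by the $\chi^2$-distance), mass conservation, and Poincar\'e--Wirtinger. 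This shows that no genuine log-Sobolev inequality is needed once the production controls $\|\na u_i\|_{L^2}^2$ rather than only the Fisher information. It also makes transparent why $u^\infty\in\dom$, i.e.\ $u_i^\infty>0$ for all $i=0,\ldots,n$, is required. The paper's citation buys essentially only brevity here. Your integral-in-time formulation on $(s,t)$ with an integral Gronwall lemma is also more careful than the paper's chain rule, which would need $\log u_i\in L^2(0,T;H^1(\Omega))$. As you note, both arguments really apply to weak solutions that satisfy the entropy inequality, such as those constructed in Theorem~\ref{thm.ex}.

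One correction to the constant. You take the dissipation in \eqref{1.ent} to be $\alpha\sum_{i=0}^n\|\na u_i\|_{L^2}^2$. However, the $0$th row and column of $(q_{ij}+r_{ij})$ vanish, so Lemma~\ref{lem.hA} only guarantees the lower bound $\alpha\sum_{i=1}^n\|\na u_i\|_{L^2}^2$. To include $u_0$ in your Poincar\'e step you must use $|\na u_0|^2\le n\sum_{i=1}^n|\na u_i|^2$, i.e.\ \eqref{3.n}. Your rate then becomes $2\lambda=\alpha\min_i u_i^\infty/((n+1)C_P)$, which is the paper's rate with $C_S$ replaced by $C_P$. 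This does not affect the validity of the conclusion.
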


The idea of the proof is to estimate the time derivative of the relative entropy:
\begin{align*}
  \frac{d}{dt}H_B(u|u^\infty) \le -\alpha\sum_{i=1}^n
  \int_\Omega|\na u_i|^2 dx \le -\alpha C(u^\infty)H_B(u|u^\infty),
\end{align*}
where the first inequality is a consequence of \eqref{1.ent} and the 
second inequality follows from a nonhomogeneous variant of the logarithmic Sobolev inequality. Then Gronwall's lemma and the Csisz\'ar--Kullback--Pinsker inequality lead to \eqref{1.decay}; see Section \ref{sec.decay} for details. 

The uniqueness of weak solutions to cross-diffusion systems is a very difficult problem and has been proven so far only for very special systems; see, e.g., \cite{ChJu18}. An alternative is the weak--strong uniqueness property. The aim is to show that a weak solution and a strong solution (if it exists) emanating from the same initial data coincide. Weak--strong uniqueness means that classical solutions are stable within the class of weak solutions. Usually, the weak--strong uniqueness is proved by means of the relative Boltzmann entropy; see, e.g., \cite{HeJu25}. However, it turns out that the relative Boltzmann entropy cannot be used here. This issue is resolved by employing another entropy and a regularized version of the equations. More precisely, the choice $q_i=r_i$, leading to \eqref{1.vfBT}, ensures that the so-called Rao entropy (see below) allows us to apply the relative entropy method. Still, certain gradient terms need to be estimated, which is only possible if we add a diffusive regularization to \eqref{1.vfBT}. Such a regularization is often used in numerics to stabilize the discretization.

\begin{theorem}[Weak--strong uniqueness]\label{thm.wsu}
Let the assumptions of Theorem \ref{thm.ex} hold and assume that $q_{ij}=r_{ij}$ in \eqref{1.qr} for $i,j=1,\ldots,n$. Let $u$ be a bounded weak solution to
\begin{align}\label{1.regBT}
  \pa_t u_i = \eta\Delta u_i + 2\diver\bigg(u_i\na q_i(u)
  - u_i\sum_{j=0}^n u_j\na q_j(u)\bigg), \quad i=0,\ldots,n,
\end{align}
with the initial and no-flux boundary conditions \eqref{1.bic}, where $\eta>0$ is arbitrary. Furthermore, let $\bar{u}$ be a nonnegative strong solution to this problem with the same initial data and the regularity $\na\bar{u}\in L^\infty(\Omega_T)$. Then $u(t)=\bar{u}(t)$ in $\Omega$ for $t\ge 0$. 
\end{theorem}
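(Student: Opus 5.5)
The plan is to run the relative entropy method with the \emph{Rao entropy} in place of the Boltzmann entropy. Write $Q=(q_{ij})_{i,j=1}^n$, so that $q_i(u)=(Qu)_i$ for $i\ge 1$ and $q_0(u)=0$ by (A4). Since $q_{ij}=r_{ij}$ and $r$ is symmetric, $Q$ is symmetric, and $Q+R=2Q$ is positive definite with smallest eigenvalue $\alpha$. For two states $u,\bar u$ I set
\begin{align*}
  H_R(u|\bar u)=\frac12\int_\Omega (u-\bar u)\cdot Q(u-\bar u)\,dx
  \ \ge\ \frac{\alpha}{4}\int_\Omega|u-\bar u|^2dx .
\end{align*}
The functional is quadratic and contains no logarithms, so no positivity of $u$ or $\bar u$ is needed. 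Moreover, $H_R$ is equivalent to $\|u-\bar u\|_{L^2}^2$. The goal is the Gronwall inequality $\frac{d}{dt}H_R(u|\bar u)\le C H_R(u|\bar u)$, with $C$ depending on $\|\na\bar u\|_{L^\infty}$, $\eta$, $\alpha$ and $Q$. Since $H_R(u(0)|\bar u(0))=0$, this gives $u=\bar u$.

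First I justify the computation for the weak solution. Write $p=Qu$ and $\bar p=Q\bar u$, and set $J_i(u)=u_i\na p_i-u_i\sum_{j=0}^n u_j\na p_j$. Since $u$ is bounded and lies in $L^2(0,T;H^1)$, we have $J_i(u)\in L^2(\Omega_T)$, $p-\bar p\in L^2(0,T;H^1)$ and $\pa_t(u-\bar u)\in L^2(0,T;H^1(\Omega)')$. Hence $p-\bar p$ is an admissible test function in the weak formulations of both solutions. The chain rule $\langle \pa_t(u-\bar u),Q(u-\bar u)\rangle=\frac{d}{dt}H_R(u|\bar u)$ holds by the Lions--Magenes lemma. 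Subtracting the two weak formulations, I obtain for $i=1,\ldots,n$ (the $i=0$ equation is redundant)
\begin{align*}
  \frac{d}{dt}H_R(u|\bar u)=-\eta\int_\Omega\na(u-\bar u):Q\na(u-\bar u)\,dx
  -2\sum_{i=0}^n\int_\Omega\big(J_i(u)-J_i(\bar u)\big)\cdot\na(p_i-\bar p_i)\,dx.
\end{align*}
I have included $i=0$ in the sum, which is harmless because $p_0=\bar p_0=0$. The first term is at most $-\frac{\eta\alpha}{2}\|\na(u-\bar u)\|_{L^2}^2$.

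Next comes the algebraic core. I decompose $J_i(u)-J_i(\bar u)=u_i\big(\na(p_i-\bar p_i)-\sum_j u_j\na(p_j-\bar p_j)\big)+E_i$, where
\begin{align*}
  E_i=(u_i-\bar u_i)\na\bar p_i-(u_i-\bar u_i)\sum_j u_j\na\bar p_j-\bar u_i\sum_j(u_j-\bar u_j)\na\bar p_j .
\end{align*}
Using $\sum_{i=0}^n u_i=1$, the main part tested against $\na(p_i-\bar p_i)$ gives the weighted variance
\begin{align*}
  \sum_i u_i\Big|\na(p_i-\bar p_i)-\sum_j u_j\na(p_j-\bar p_j)\Big|^2\ \ge 0,
\end{align*}
so it can simply be dropped. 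The error terms satisfy $|E_i|\le C\|\na\bar u\|_{L^\infty}|u-\bar u|$, because $u,\bar u$ are bounded and $u_0-\bar u_0=-\sum_{i\ge1}(u_i-\bar u_i)$. Young's inequality then gives
\begin{align*}
  2\Big|\sum_i\int_\Omega E_i\cdot\na(p_i-\bar p_i)\,dx\Big|\le\frac{\eta\alpha}{4}\|\na(u-\bar u)\|_{L^2}^2+C(\eta)\|\na\bar u\|_{L^\infty}^2\|u-\bar u\|_{L^2}^2 .
\end{align*}
The gradient part is absorbed by the regularizing term, and the $L^2$ part is controlled by $\frac{4}{\alpha}H_R(u|\bar u)$. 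This closes the Gronwall argument.

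I expect the main obstacle to be this error estimate. The cross terms are of the form $|u-\bar u|\,|\na(u-\bar u)|$, and the degenerate, weighted-variance dissipation from the cross-diffusion part cannot control $\na(u-\bar u)$ on its own. This is exactly why the $\eta\Delta u_i$ regularization is needed, and why the Boltzmann entropy, whose relative version produces $\na\log$ terms, does not work here. I will check carefully that every error term indeed carries a factor $\na\bar u$ and not $\na u$. This relies on rewriting $u_j\na p_j-\bar u_j\na\bar p_j$ so that the gradient falls on the difference $p-\bar p$ or on $\bar p$. In the cubic term $u_i\sum_j u_j\na p_j$, I would do this rewriting step by step.
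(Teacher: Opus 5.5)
Your proposal is correct and follows essentially the same route as the paper: relative Rao entropy, and a split of the flux difference into a part that is nonpositive plus error terms carrying $\nabla\bar u$. Your weighted-variance identity plays the role of the paper's Cauchy--Schwarz bound $I_{21}+I_{31}\le 0$, and the error terms are absorbed into the $\eta$-regularization by Young's inequality before Gronwall closes the argument.
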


This theorem is proved by estimating the relative Rao entropy 
\begin{align*}
  H_R(u|\bar{u}) = \sum_{i,j=1}^n\int_\Omega q_{ij}(u_i-\bar{u}_i)
  (u_j-\bar{u}_j)dx
\end{align*}
between a weak solution $u$ and a strong solution $\bar{u}$. A computation shows for any $\delta>0$ that (see Section \ref{sec.wsu})
\begin{align*}
  \frac{d}{dt}H_R(u|\bar{u}) 
  &\le -\eta\sum_{i=1}^n\int_\Omega|\na(u_i-\bar{u}_i)|^2 dx
  + \delta\sum_{i=1}^n\int_\Omega|\na(q_i(u)-q_i(\bar{u}))|^2 dx \\
  &\phantom{xx}+ C(\delta)\sum_{i=1}^n\int_\Omega|u_i-\bar{u}_i|^2 dx.
\end{align*}
The second term on the right-hand side can be absorbed by the first term if $\delta>0$ is sufficiently small. At this point, we need the assumption $\eta>0$, which is not needed for the existence result. The last term on the right-hand side is bounded by the relative Rao entropy (since $(q_{ij})_{i,j=1}^n$ is assumed to be positive definite). This leads to
\begin{align*}
  \frac{d}{dt}H_R(u|\bar{u}) \le CH_R(u|\bar{u}), \quad t>0,
\end{align*}
and since $H_R(u(0)|\bar{u}(0))=0$ by assumption, we conclude from Gronwall's lemma that $H_R(u(t)|\bar{u}(t))=0$ and hence $u(t)=\bar{u}(t)$ in $\Omega$ for $t>0$.


\subsection{Varying drag coefficients}

The case of different drag coefficients $k_{ij}$ is delicate, since the entropy method does not apply in the general case. We consider the cross-diffusion system \eqref{1.K1A}, where $K(u)$ is no longer the unit matrix. We are able to show the following results:

\begin{itemize}
\item The diffusion matrix $K(u)^{-1}A(u)$ in \eqref{1.K1A} is positively stable, i.e., the real parts of its eigenvalues are positive (Proposition \ref{prop.posstab}). This means that \eqref{1.K1A} is parabolic in the sense of Petrovskii. By Amann's theory \cite{Ama90}, for smooth initial data, system \eqref{1.mass}--\eqref{1.bic} possesses a unique local classical solution.
\item If the values $k_{ij}$ are a perturbation of the number one, the matrix $h''_B(u) K(u)^{-1}A(u)$ is positive definite for $u\in\dom$ (Proposition \ref{prop.perturb}). By the boundedness-by-entropy method, there exists a global bounded weak solution to \eqref{1.mass}--\eqref{1.bic}.
\item In case $n=2$ and $r_1=r_2=0$, we determine conditions on $(k_{ij})$ and $(q_{ij})$ such that the matrix $h''_B(u)K(u)^{-1}A(u)$ is positive definite. Then the boundedness-by-entropy method can be applied, yielding the existence of a global bounded weak solution. However, for general $(k_{ij})$ and $(q_{ij})$, the matrix $h''_B(u)K(u)^{-1}A(u)$ may have a negative eigenvalue, i.e., it is generally not positive definite. 
\end{itemize}

Our results show that equations \eqref{1.mass}--\eqref{1.bic} always have a local solution and, if $k_{ij}$ is close to one, there exists a global solution. However, in the general case, the entropy method cannot be easily applied. A perturbation result was also shown in \cite[Theorem 4.3]{KSZ21} assuming $k_{ij}=k_ik_j$ for some $k_i>0$. Our result holds in a more general setting. 

The results may be explained by the fact that the pressure forces are of quadratic degenerate type, like $u_i\na u_j$, while the drag forces are of nondegenerate type, which yields some kind of incompatibility close to vacuum $u_i=0$ in the estimations; see Remark \ref{rem.comb}. 

The paper is organized as follows. We derive equations \eqref{1.mass}--\eqref{1.mom} formally from multiphase compressible Navier--Stokes equations and explore the Boltzmann and Rao entropy structure in Section \ref{sec.model}. Our main results in case $k_{ij}=1$ (Theorems \ref{thm.ex}--\ref{thm.wsu}) are proved in Section \ref{sec.proofs}, while in Section \ref{sec.unsamek}, our results for varying coefficients $k_{ij}\neq 1$ are shown. We present some numerical experiments for the one-dimensional equations in Section \ref{sec.num}. Appendix \ref{app} is devoted to the technical proof of Proposition \ref{prop.G}. 


\section{Multiphase modeling}\label{sec.model}

We follow the multiphase approach of \cite{LKBJS06}. Our starting point are the multiphase compressible Navier--Stokes equations for the volume fraction $u_i$ of the $i$th phase and its velocity $v_i$,
\begin{align}
  & \pa_t u_i + \diver(u_iv_i) = 0, \quad i=0,\ldots,n, 
  \nonumber \\
  & \eps\pa_t (u_iv_i) + \eps\diver(u_iv_i\otimes v_i)
  - \diver(u_i\mathbb{S}_i) = F_i. \label{2.mom}
\end{align}
Here, $F_i$ represents the forces acting between pairs of phases, $u_i\mathbb{S}_i$ is the stress tensor of the $i$th phase, and $\eps>0$ represents a small-velocity scaling. As cells are mainly composed of water, their densities are assumed to be constant and having the same value \cite{LOK22}. Thus, the densities can be removed from the equations, only keeping the phase fraction $u_i$. We suppose that the total flow vanishes, $\sum_{i=0}^n u_iv_i=0$. As argued in the introduction, the property $\sum_{i=0}^n u_i(0)=1$ implies the volume-filling constraint $\sum_{i=0}^n u_i(t)=1$ for all $t>0$. Thus, the vector $u=(u_1,\ldots,u_n)$ contains the full information since $u_0=1-\sum_{i=1}^n u_i$.

We still need to determine the stress tensor $u_i\mathbb{S}_i$ and the interaction force $F_i$. We assume that $u_i\mathbb{S}_i = -u_ip_i\mathbb{I} + \eps \mathbb{T}_i$, where $p_i$ is the phase-specific pressure, $\mathbb{I}$ the unit matrix, and $\eps \mathbb{T}_i$ the viscous stress. The pressure $p_i$ is the sum of the overall pressure $p$ that is common to all mixture components and the intraphase pressure $q_i(u)$, 
\begin{align*}
  p_i = p + q_i(u), \quad q_i(u) = \sum_{j=0}^n q_{ij}u_j,
  \quad i=0,\ldots,n.
\end{align*}
Compared to \cite[(11)]{LKBJS06}, we allow for the self-pressure $q_{ii}$. The interaction force $F_i$ is the sum of the interphase forces $f_{ij}$ and the viscous drag forces $g_{ij}$,
\begin{align*}
  F_i = \sum_{j\neq i}(f_{ij}+g_{ij}), \quad
  f_{ij} = p_{ij} u_j\na u_i - p_{ji} u_i\na u_j, \quad
  g_{ij} = -k_{ij} u_iu_j(v_i-v_j),
\end{align*}
where $p_{ij}$ and $k_{ji}$ are nonnegative parameters. The interphase pressure $p_{ij}$ is the sum of the overall pressure $p$ and some extra pressure $r_{ij}$ due to traction between the phases $i$ and $j$, i.e.
\begin{align*}
  p_{ij} = p+r_{ij}, \quad i,j=0,\ldots,n.
\end{align*}
For convenience, we set $r_{ii}=0$ and $k_{ii}=0$ for $i=0,\ldots,n$. If $(p_{ij})$ and $(k_{ij})$ are symmetric, the total force $\sum_{i=0}^n F_i$ vanishes, which means that there are no external forces. Possible external forces can be modeled by nonsymmetric coefficients $p_{ij}$. In contrast to \cite{LKBJS06}, we allow for $q_{ij}\neq r_{ij}$. 

\subsection{Derivation of the cross-diffusion equations}

In the (formal) limit $\eps\to 0$ in \eqref{2.mom}, the stress tensor $u_i \mathbb{S}_i$ simplifies to $-u_ip_i\mathbb{I}$, and the momentum balance becomes
\begin{align*}
  0 &= -\na(u_ip_i) + F_i = -\na(u_ip+u_iq_i(u)) \\
  &\phantom{xx}
  + \sum_{j=0}^n \big((p+r_{ij})u_j\na u_i - (p+r_{ji})u_i\na u_j\big)
  - \sum_{j=0}^n k_{ij}u_iu_j(v_i-v_j).
\end{align*}

In the following, we suppose that $(r_{ij})$ and $(k_{ij})$ are symmetric. Taking into account that $\sum_{j=0}^n u_j=1$, the second term on the right-hand side can be simplified:
\begin{align*}
  \sum_{j=0}^n\big((p+r_{ij})u_j\na u_i - (p+r_{ji})u_i\na u_j\big)
  = p\na u_i + r_i(u)\na u_i - u_i\na r_i(u),
\end{align*}
where we have set $r_i(u)=\sum_{j=0}^n r_{ij}u_j$ and used $r_{ij}=r_{ji}$. This shows that
\begin{align}\label{2.nap}
  0 &= -\na(u_ip_i) + F_i \\
  &= -u_i\na p - \na(u_iq_i(u)) + (r_i(u)\na u_i-u_i\na r_i(u)) 
  - \sum_{j=0}^n k_{ij}u_iu_j(v_i-v_j). \nonumber 
\end{align}
The overall mixture pressure $p$ can be interpreted as a Lagrange multiplier associated to the constraint $\sum_{i=0}^n u_i=1$. We can remove $\na p$ by summing \eqref{2.nap} over $i=0,\ldots,n$. Taking into account that $(k_{ij})$ is symmetric, we find that
\begin{align*}
  0 = -\na p - \sum_{i=0}^n\na(u_iq_i(u))
  + \sum_{i=0}^n(r_i(u)\na u_i-u_i\na r_i(u)),
\end{align*}
which provides an expression for $\na p$. We replace $\na p$ in \eqref{2.nap} by this expression:
\begin{align}\label{2.kij}
  \sum_{j=0}^n k_{ij}u_iu_j(v_i-v_j)
  &= u_i\bigg(\sum_{j=0}^n\na(u_jq_j(u))
  - \sum_{j=0}^n(r_j(u)\na u_j-u_j\na r_j(u))\bigg) \\
  &\phantom{xx}- \na(u_iq_i(u)) + (r_i(u)\na u_i-u_i\na r_i(u)). \nonumber 
\end{align}
This yields equations \eqref{1.mass}--\eqref{1.mom}. Let $k_{ij}=1$ for $i,j=0,\ldots,n$. Because of $\sum_{j=0}^n u_jv_j=0$, the left-hand side of \eqref{2.kij} becomes 
\begin{align*}
  \sum_{j=0}^n k_{ij}u_iu_j(v_i-v_j)
  = \sum_{j=0}^n\big(u_j(u_iv_i)-u_i(u_jv_j)\big) = u_iv_i,
\end{align*}
and we recover equations \eqref{1.samek}. 


\subsection{Entropy structure}\label{sec.ent}

We show that equations \eqref{1.samek} possess a Boltzmann and Rao entropy structure. The case of unequal coefficients $k_{ij}\neq 1$ is investigated in Proposition \ref{prop.perturb}. Recall the definitions of the Boltzmann and Rao entropy densities:
\begin{align}\label{3.ent}
  h_B(u) = \sum_{i=0}^n u_i(\log u_i-1), \quad 
  h_R(u) = \frac12\sum_{i,j=0}^n q_{ij} u_i u_j.
\end{align}

\begin{lemma}[Boltzmann entropy equality]\label{lem.hB}
Let $k_{ij}=1$ for all $i,j=0,\ldots,n$, $i\neq j$ and let $u$ be a positive smooth solution to \eqref{1.qr}--\eqref{1.samek}. Then
\begin{align*}
  \frac{d}{dt}\int_\Omega h_B(u)dx 
  + \sum_{i,j=0}^n\int_\Omega(q_{ij}+r_{ij})\na u_i\cdot\na u_j dx 
  + 4\sum_{i=0}^n\int_\Omega(q_{i}(u)-r_{i}(u))|\na\sqrt{u_i}|^2dx = 0.
\end{align*} 
In particular, $t\mapsto \int_\Omega h_B(u(t))dx$ is a Lyapunov functional if the matrix $(q_{ij}+r_{ij})_{i,j=0}^n$ is positive semidefinite and  $q_{ij}\ge r_{ij}$. 
\end{lemma}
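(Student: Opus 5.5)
The plan is to compute the time derivative of the Boltzmann entropy directly, writing each equation in \eqref{1.samek} as a conservation law and integrating by parts against the entropy variable $\log u_i$. First I write \eqref{1.samek} as $\pa_t u_i + \diver J_i = 0$ with flux
\begin{align*}
  J_i = u_iv_i = -\na(u_iq_i(u)) + u_i S + \big(r_i(u)\na u_i - u_i\na r_i(u)\big) - u_i R,
\end{align*}
where $S=\sum_{j=0}^n\na(u_jq_j(u))$ and $R=\sum_{j=0}^n(r_j(u)\na u_j-u_j\na r_j(u))$ do not depend on $i$. Since $u$ is positive and smooth, $h_B'$ in each component is $\log u_i$. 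Moreover, $\sum_{i=0}^n\pa_t u_i=0$ by volume filling, so the $-1$ terms in $h_B$ contribute nothing. Hence
\begin{align*}
  \frac{d}{dt}\int_\Omega h_B(u)dx = \sum_{i=0}^n\int_\Omega \pa_t u_i\log u_i\,dx
  = \sum_{i=0}^n\int_\Omega J_i\cdot\frac{\na u_i}{u_i}dx,
\end{align*}
where the boundary terms vanish by the no-flux condition $J_i\cdot\nu=0$ in \eqref{1.bic}.

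The key structural observation is that the Lagrange-multiplier parts $u_iS$ and $u_iR$ drop out. Indeed, $\sum_i u_iS\cdot\na u_i/u_i = S\cdot\na\sum_{i=0}^n u_i = 0$, and the same holds for $R$, because $\sum_{i=0}^n u_i=1$. For the remaining terms I expand
\begin{align*}
  \na(u_iq_i)\cdot\frac{\na u_i}{u_i} = q_i\frac{|\na u_i|^2}{u_i} + \na q_i\cdot\na u_i, \qquad
  (r_i\na u_i-u_i\na r_i)\cdot\frac{\na u_i}{u_i} = r_i\frac{|\na u_i|^2}{u_i} - \na r_i\cdot\na u_i .
\end{align*}
Summing gives
\begin{align*}
  \frac{d}{dt}\int_\Omega h_B(u)dx = -\sum_{i=0}^n\int_\Omega(q_i-r_i)\frac{|\na u_i|^2}{u_i}dx - \sum_{i=0}^n\int_\Omega\na(q_i+r_i)\cdot\na u_i\,dx .
\end{align*}
With constant coefficients, $\na(q_i+r_i)=\sum_j(q_{ij}+r_{ij})\na u_j$, and $|\na u_i|^2/u_i = 4|\na\sqrt{u_i}|^2$. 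This yields the claimed identity. No symmetry of $(q_{ij})$ is needed, since only the quadratic form $\sum_{i,j}(q_{ij}+r_{ij})\na u_i\cdot\na u_j$ appears.

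For the Lyapunov statement, positive semidefiniteness of $(q_{ij}+r_{ij})$ makes the second integral nonnegative. The condition $q_{ij}\ge r_{ij}$, together with $u_j\ge0$, gives $q_i(u)-r_i(u)=\sum_j(q_{ij}-r_{ij})u_j\ge0$, so the third integral is nonnegative too. The only real care needed is the bookkeeping: the indices run over $i=0,\ldots,n$, and $u_0=1-\sum_{i=1}^n u_i$ must be positive for $\log u_0$ to make sense. I expect the cancellation of the $S$ and $R$ terms via volume filling to be the crucial step rather than a genuine obstacle.
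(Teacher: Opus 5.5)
Your proposal is correct and follows essentially the same route as the paper's proof. You test with $\log u_i$ and integrate by parts using the no-flux condition, cancel the $u_iS$ and $u_iR$ contributions via $\sum_{i=0}^n\nabla u_i=0$, and insert the constant-coefficient definitions of $q_i(u)$, $r_i(u)$ together with $|\nabla u_i|^2/u_i=4|\nabla\sqrt{u_i}|^2$; your Lyapunov argument via $q_i(u)-r_i(u)=\sum_j(q_{ij}-r_{ij})u_j\ge 0$ is exactly what the paper uses implicitly.
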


\begin{proof}
We compute formally:
\begin{align*}
  \frac{d}{dt}&\int_\Omega h_B(u)dx 
  = \sum_{i=0}^n\int_\Omega \pa_t u_i\log u_i dx
  = \sum_{i=0}^n\int_\Omega u_iv_i\cdot\na\log u_i dx \\
  &= -\int_\Omega\bigg(\sum_{i=0}^n q_i(u)\frac{|\na u_i|^2}{u_i}
  + \sum_{i=0}^n \na q_i(u)\cdot\na u_i 
  - \sum_{j=0}^n\na(u_jq_j(u))\cdot\sum_{i=0}^n \na u_i\bigg)dx \\
  &\phantom{xx}+ \int_\Omega\bigg(\sum_{i=0}^n
  r_i(u)\frac{|\na u_i|^2}{u_i} - \sum_{i=0}^n\na r_i(u)\cdot\na u_i \\
  &\phantom{xx}
  - \sum_{j=0}^n(r_j(u)\na u_j-u_j\na r_j(u))\cdot\sum_{i=0}^n\na u_i
  \bigg)dx.
\end{align*}
Taking into account that $\sum_{i=0}^n\na u_i=0$ and inserting the definitions of $q_i(u)$ and $r_i(u)$, we obtain
\begin{align*}
  \frac{d}{dt}\int_\Omega h_B(u)dx  = -\sum_{i,j=0}^n\int_\Omega
  (q_{ij}+r_{ij})\na u_j\cdot\na u_i dx
  - \sum_{i=0}^n\int_\Omega(q_i(u)-r_i(u))\frac{|\na u_i|^2}{u_i}dx,
\end{align*}
which finishes the proof.
\end{proof}

If the drag coefficients vary, we can show that the system has a Rao entropy structure. The Rao entropy can be interpreted as the potential energy of the fluid mixture, and the associated energy dissipation originates from the drag effects.

\begin{lemma}[Rao entropy inequality]\label{lem.rao}
Let $q_{ij}=r_{ij}$ for $i,j=0,\ldots,n$ be symmetric and let $u$ be a smooth solution to \eqref{1.mass}--\eqref{1.bic} (the coefficients $k_{ij}$ may vary). Then
\begin{align*}
  \frac{d}{dt}\int_\Omega h_R(u)dx 
  + \frac14\sum_{i,j=0}^n\int_\Omega k_{ij}u_iu_j|v_i-v_j|^2 dx = 0.
\end{align*}
\end{lemma}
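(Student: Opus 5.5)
The plan is a direct computation: simplify the force balance \eqref{1.mom} under $q_{ij}=r_{ij}$, test the mass equations \eqref{1.mass} with the derivative of $h_R$, and recognize the drag term after a symmetrization. I treat $h_R$ as a function of all $n+1$ fractions $u_0,\ldots,u_n$. I use the equations for every $i=0,\ldots,n$, which is legitimate since \eqref{1.mass}--\eqref{1.mom} hold for all phases including the solvent.

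\emph{Step 1 (simplifying the force balance).} Since $q_{ij}=r_{ij}$, we have $q_i(u)=r_i(u)$. Expanding $\na(u_iq_i)=q_i\na u_i+u_i\na q_i$, the terms $\pm q_i\na u_i$ cancel. The same cancellation occurs inside the sums over $j$. Hence the right-hand side of \eqref{1.mom} reduces to
\begin{align*}
  \sum_{j=0}^n k_{ij}u_iu_j(v_i-v_j) = -2u_i\bigg(\na q_i(u)-\sum_{j=0}^n u_j\na q_j(u)\bigg),
  \quad i=0,\ldots,n.
\end{align*}

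\emph{Step 2 (time derivative of the entropy).} By symmetry of $(q_{ij})$, $\pa h_R/\pa u_i=q_i(u)$. Using \eqref{1.mass} and integrating by parts with the no-flux condition \eqref{1.bic}, we get
\begin{align*}
  \frac{d}{dt}\int_\Omega h_R(u)dx=\sum_{i=0}^n\int_\Omega q_i(u)\pa_t u_i\,dx=\sum_{i=0}^n\int_\Omega u_iv_i\cdot\na q_i(u)\,dx.
\end{align*}
Because the total flux vanishes, $\sum_i u_iv_i=0$, we may subtract $\sum_i u_iv_i\cdot\sum_j u_j\na q_j(u)=0$ for free. The integrand then becomes $\sum_i v_i\cdot u_i(\na q_i-\sum_j u_j\na q_j)$. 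By Step 1 this equals
\begin{align*}
  -\frac12\sum_{i,j=0}^n k_{ij}u_iu_j\,v_i\cdot(v_i-v_j).
\end{align*}

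\emph{Step 3 (symmetrization).} Swap $i\leftrightarrow j$ in the last sum and use $k_{ij}=k_{ji}$. This gives $\sum_{i,j}k_{ij}u_iu_j v_i\cdot(v_i-v_j)=\frac12\sum_{i,j}k_{ij}u_iu_j|v_i-v_j|^2$, which yields the stated identity with the factor $\frac14$.

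The only delicate point is Step 1. The cancellation of the $q_i\na u_i$ terms relies on $q_i=r_i$, and the symmetry of both $(q_{ij})$ and $(k_{ij})$ is needed in Steps 2 and 3. Everything else is routine bookkeeping, carried out formally for smooth solutions.
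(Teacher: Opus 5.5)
Your proof is correct and follows the paper's argument essentially step by step. You reduce the force balance to $\sum_j k_{ij}u_iu_j(v_i-v_j)=-2u_i\big(\na q_i(u)-\sum_j u_j\na q_j(u)\big)$, test the mass equations with $q_i(u)=\pa h_R/\pa u_i$, and symmetrize using $k_{ij}=k_{ji}$. The only cosmetic difference is that you discard the vanishing term $\sum_i u_iv_i\cdot\sum_j u_j\na q_j(u)$ before substituting the drag relation, whereas the paper carries it along and drops it at the end.
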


\begin{proof}
In case $q_{ij}=r_{ij}$, the fluxes become
\begin{align}\label{2.uv}
  \sum_{j=0}^n k_{ij}u_iu_j(v_i-v_j) 
  = -2u_i\na q_i(u) + 2u_i\sum_{j=0}^n u_j\na q_j(u),
  \quad i=0,\ldots,n.
\end{align}
Furthermore, the symmetry $q_{ij}=q_{ji}$ implies that $\pa h_R/\pa u_i=q_i(u)$, and we infer from \eqref{2.uv} and the symmetry of $k_{ij}$ that 
\begin{align*}
  \frac{d}{dt}\int_\Omega h_R(u)dx
  &= \sum_{i=0}^n\int_\Omega u_iv_i\cdot\na q_i(u)dx \\
  &= \sum_{i=0}^n\int_\Omega v_i\cdot
  \bigg(-\frac12\sum_{j=0}^n k_{ij}u_iu_j(v_i-v_j)
  + u_i\sum_{j=0}^n u_j\na q_j(u)\bigg)dx \\
  &= -\frac14\sum_{i,j=0}^n\int_\Omega k_{ij}u_iu_j(v_i-v_j)\cdot v_i dx
  -\frac14\sum_{i,j=0}^n\int_\Omega k_{ji}u_ju_i(v_j-v_i)\cdot v_j dx \\
  &\phantom{xx}+ \int_\Omega \sum_{i=0}^n 
  u_iv_i\cdot\sum_{j=0}^n u_j\na q_j(u)dx \\
  &= -\frac14\sum_{i,j=0}^n\int_\Omega k_{ij}u_iu_j|v_i-v_j|^2 dx
  + \int_\Omega \sum_{i=0}^n u_iv_i\cdot\sum_{j=0}^n u_j\na q_j(u)dx.
\end{align*}
As the sum over $i=0,\ldots,n$ in the last integral vanishes, the lemma follows.
\end{proof}

One may wonder whether the Boltzmann entropy also provides some estimates, like in \cite{CCDJ25}. This is indeed possible if the drag coefficients include a``degeneracy''; see Remark \ref{rem.comb}. 

\begin{lemma}[Combined Boltzmann and Rao entropies]\label{lem.comb}
Let $k_{ij} = 1 + k_{ij}^*\sqrt{u_iu_j}$ with symmetric $k_{ij}^*>0$, $q_{ij}=r_{ij}$, $q_{i0}=q_{0i}=0$ for $i,j=0,\ldots,n$, $(q_{ij})_{i,j=1}^n$ be positive definite (with smallest eigenvalue $\alpha>0$), and $u$ be a positive smooth solution to \eqref{1.mass}--\eqref{1.bic}. Then there exists $C>0$, depending on $k_{ij}^*$ and $q_{ij}$, such that
\begin{align*}
  \frac{d}{dt}\int_\Omega\big(h_B(u) + Ch_R(u)\big)dx
  + \alpha \sum_{i=0}^n\int_\Omega |\na u_i|^2 dx \le 0.
\end{align*}
\end{lemma}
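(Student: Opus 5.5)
The plan is to add the Boltzmann entropy identity, computed directly for the modified drag, to a large multiple $C$ of the Rao identity from Lemma \ref{lem.rao}. The Rao dissipation $\frac14\sum_{i,j}k_{ij}u_iu_j|v_i-v_j|^2$ is nondegenerate in the velocity differences because $k_{ij}\ge 1$. Lemma \ref{lem.rao} applies here, since $q_{ij}=r_{ij}$ are symmetric and $k_{ij}$ may vary. It gives
\begin{align*}
  \frac{d}{dt}\int_\Omega h_R(u)dx + \frac14\sum_{i,j=0}^n\int_\Omega k_{ij}u_iu_j|v_i-v_j|^2dx = 0,
\end{align*}
and we use the lower bound $k_{ij}u_iu_j|v_i-v_j|^2\ge u_iu_j|v_i-v_j|^2$.

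For the Boltzmann part, first rewrite the drag. Because $\sum_j u_jv_j=0$ and $\sum_j u_j=1$,
\begin{align*}
  \sum_{j=0}^n k_{ij}u_iu_j(v_i-v_j)
  = J_i + \sum_{j=0}^n k_{ij}^*(u_iu_j)^{3/2}(v_i-v_j), \qquad J_i=u_iv_i.
\end{align*}
By \eqref{2.uv} this equals $F_i:=-2u_i\na q_i(u)+2u_i\sum_j u_j\na q_j(u)$. Hence $J_i=F_i-\sum_j k^*_{ij}(u_iu_j)^{3/2}(v_i-v_j)$, and as in the proof of Lemma \ref{lem.hB},
\begin{align*}
  \frac{d}{dt}\int_\Omega h_B(u)dx = \sum_{i=0}^n\int_\Omega J_i\cdot\frac{\na u_i}{u_i}dx
  = \sum_{i=0}^n\int_\Omega \frac{F_i}{u_i}\cdot\na u_i\,dx
  - \sum_{i,j=0}^n\int_\Omega k^*_{ij}(u_iu_j)^{3/2}(v_i-v_j)\cdot\frac{\na u_i}{u_i}dx.
\end{align*}

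The $F_i$ term is the good part. The contribution $2\sum_j u_j\na q_j(u)$ is multiplied by $\sum_i\na u_i=0$ and drops out. What remains is $-2\sum_{i,j}q_{ij}\na u_i\cdot\na u_j$. Since $q_{i0}=q_{0i}=0$, this involves only $i,j\ge1$, so it is bounded above by $-2\alpha\sum_{i=1}^n|\na u_i|^2$. Using $|\na u_0|^2\le n\sum_{i\ge1}|\na u_i|^2$, this in turn is at most $-c\,\alpha\sum_{i=0}^n|\na u_i|^2$ with $c>0$ depending only on $n$. The value of $\alpha$ in the final statement has to be read up to this constant, or the accounting has to be done more carefully.

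The main obstacle is the cross term. The naive Young splitting with weight $(u_iu_j)^{3/2}$ produces the singular factor $|\na u_i|^2/u_i^{1/2}$, which cannot be controlled. The remedy is to absorb the powers of $u$ first:
\begin{align*}
  (u_iu_j)^{3/2}u_i^{-1} = (u_iu_j)^{1/2}u_j .
\end{align*}
This yields
\begin{align*}
  k^*_{ij}(u_iu_j)^{3/2}|v_i-v_j|\frac{|\na u_i|}{u_i}
  \le \frac{c\alpha}{2(n+1)}|\na u_i|^2 + \frac{(n+1)(k^*_{ij})^2}{2c\alpha}\,u_j^2\,u_iu_j|v_i-v_j|^2 .
\end{align*}
Since $u_j\le1$, the last term is at most $C_1u_iu_j|v_i-v_j|^2$ with $C_1$ depending only on $k^*_{ij}$, $\alpha$ and $n$.

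Summing over $i,j$, the gradient pieces absorb at most half of the Boltzmann dissipation. Choosing $C\ge 4C_1$ lets the Rao dissipation absorb the velocity-difference pieces. Adding the two identities then gives the claimed inequality, with constant $\frac{c\alpha}{2}$ in place of $\alpha$, which is the same up to renaming. The computation is formal for positive smooth solutions, so no regularization is needed.
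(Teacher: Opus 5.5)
Your argument is correct and is essentially the paper's proof: the Boltzmann computation via \eqref{2.uv}, then Young's inequality on the drag cross term, then absorption of the velocity differences by a large multiple of the Rao dissipation of Lemma \ref{lem.rao}. The only difference is cosmetic: the paper first symmetrizes the cross term into $(v_i-v_j)\cdot\na(\log u_i-\log u_j)$ and uses $u_iu_j\na(\log u_i-\log u_j)=u_j\na u_i-u_i\na u_j$, whereas you cancel $u_i$ directly via $(u_iu_j)^{3/2}u_i^{-1}=(u_iu_j)^{1/2}u_j$.

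Your caveat about the constant is justified. With $q_{i0}=q_{0i}=0$ the pressure term only controls $\sum_{i=1}^n|\na u_i|^2$, and the paper's bound by $-2\alpha\sum_{i=0}^n|\na u_i|^2$ silently drops the factor from \eqref{3.n}, so both routes give $\alpha/(n+1)$. Indeed, for $n=1$ the left-hand side of the stated inequality equals $\int_\Omega 2\alpha(1+k_{01}^*s)^{-1}(k_{01}^*s-C\alpha s^2)|\na u_1|^2dx$ with $s=\sqrt{u_0u_1}$. This is positive whenever $0<u_1<(k_{01}^*/(C\alpha))^2$ and $\na u_1\not\equiv0$, so the version with exactly $\alpha$ is false.
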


\begin{proof}
The fluxes become in our situation
\begin{align*}
  u_iv_i + \sum_{j=0}^n k_{ij}^*(u_iu_j)^{3/2}(v_i-v_j) 
  = -2u_i\na q_i(u) + 2u_i\sum_{j=0}^n\int_\Omega u_j\na q_j(u),
\end{align*}
where $i=0,\ldots,n$. Inserting this expression in
\begin{align*}
  \frac{d}{dt}\int_\Omega h_B(u)dx
  = \sum_{i=0}^n\int_\Omega u_iv_i\cdot\na\log u_i dx
\end{align*}
yields that
\begin{align*}
  \frac{d}{dt}\int_\Omega h_B(u)dx
  &= -\sum_{i,j=0}^n\int_\Omega k_{ij}^*(u_iu_j)^{3/2}(v_i-v_j)
  \cdot\na\log u_i dx \\
  &\phantom{xx}- 2\sum_{i=0}^n\int_\Omega 
  \na q_i(u)\cdot\na u_i dx 
  + 2\int_\Omega\bigg(\sum_{i=0}^n\na u_i\bigg)
  \cdot\bigg(\sum_{j=0}^n u_j\na q_j(u)\bigg)dx \\
  &= -\frac12\sum_{i,j=0}^n\int_\Omega k_{ij}^*(u_iu_j)^{3/2}(v_i-v_j)
  \cdot\na(\log u_i-\log u_j)dx \\
  &\phantom{xx}- 2\sum_{i,j=0}^n\int_\Omega q_{ij}\na u_i\cdot\na u_j dx,
\end{align*}
where we used the symmetry of $k_{ij}^*$ and the fact that $\sum_{i=0}^n\na u_i=0$. Next, by Young's inequality with $\delta>0$, $q_{i0}=q_{0i}=0$, and the positive definiteness of $(q_{ij})_{i,j=1}^n$,
\begin{align*}
  \frac{d}{dt}\int_\Omega h_B(u)dx
  &\le -2\alpha\sum_{i=0}^n|\na u_i|^2 dx
  + \delta\sum_{i,j=0}^n\int_\Omega u_i^2u_j^2
  |\na(\log u_i-\log u_j)|^2 dx \\
  &\phantom{xx}+ C(\delta)\sum_{i,j=0}^n\int_\Omega 
  k_{ij}^*u_iu_j|v_i-v_j|^2 dx.
\end{align*}
Because of $u_i\le 1$, the second term on the right-hand side is estimated as
\begin{align*}
  \sum_{i,j=0}^n&\int_\Omega u_i^2u_j^2
  |\na(\log u_i-\log u_j)|^2 dx \\
  &= \sum_{i,j=0}^n\int_\Omega\big(u_j^2|\na u_i|^2 + u_i^2|\na u_j|^2
  - 2u_iu_j\na u_i\cdot\na u_j\big)dx
  \le C_0\sum_{i=0}^n\int_\Omega |\na u_i|^2 dx.
\end{align*}
Then, taking into account Lemma \ref{lem.rao} and choosing $\delta = \alpha/C_0$, we find that
\begin{align*}
  \frac{d}{dt}\int_\Omega\big(h_B(u) + C(\delta)h_R(u)\big)dx
  \le (-2\alpha + \delta C_0)\sum_{i=0}^n\int_\Omega |\na u_i|^2 dx
  \le -\alpha \sum_{i=0}^n\int_\Omega |\na u_i|^2 dx,
\end{align*}
which ends the proof.
\end{proof}

\begin{remark}\label{rem.comb}\rm
The previous lemma indicates that the pressure and drag forces are not compatible with each other, which may explain the difficulties in determining the existence of global weak solutions to \eqref{1.mass}--\eqref{1.bic} for general (symmetric) $k_{ij}$. Indeed, the drag forces yield a nondegenerate diffusion. For instance, choosing $q_i=1$ and $r_i=0$ such that $u_iv_i=-\na u_i$, we end up with the diffusion equation $\pa_t u_i = \Delta u_i$. The pressure forces, however, yield a quadratic porous-medium-type degeneracy like $u_i\na u_i$. If we replace $k_{ij}$ by $k_{ij}^*\sqrt{u_iu_j}$, the drag forces include a quadratic degeneracy, and we are able to estimate both pressure and drag forces. Notice that in contrast to Proposition \ref{prop.perturb}, the coefficients $k_{ij}$ do {\em not} need to be a perturbation of one. 
\end{remark}


\section{Mathematical analysis: equal drag coefficients}
\label{sec.proofs}

In this section, we prove Theorems \ref{thm.ex}--\ref{thm.wsu}.

\subsection{Proof of Theorem \ref{thm.ex}}\label{sec.ex}

The existence of global weak solutions to \eqref{1.qr}--\eqref{1.samek} follows from the boundedness-of-entropy method \cite[Theorem 2]{Jue15}. To this end, we need to verify the hypotheses of this theorem. The entropy density $h_B(u)=\sum_{i=0}^n u_i(\log u_i-1)$ with $u\in\dom$ and $u_0=1-\sum_{i=1}^n u_i$ is convex, satisfies $h_B\in C^2(\dom)$, and the derivative $h'_B:\dom\to\R^n$ is invertible on $\R^n$. This verifies Hypothesis (H1) in \cite{Jue15}. 

Since $q_0(u)=r_0(u)=0$, the sums in \eqref{1.samek} range from $i=1,\ldots,n$. This gives for the flux $J_i=u_iv_i$:
\begin{align*}
  J_i = -\sum_{j=1}^n A_{ij}(u)\na u_j\quad\mbox{for }i=1,\ldots,n,
\end{align*}
where for $i,j=1,\ldots,n$, 
\begin{align}\label{4.A}
  A_{ij}(u) = (q_i(u)-r_i(u))\delta_{ij}
  + u_i\bigg(q_{ij} - q_j(u) + r_{ij} + r_j(u) 
  - \sum_{\ell=1}^n u_\ell(q_{\ell j}+r_{\ell j})\bigg).
\end{align}
The Hessian of the Boltzmann entropy density becomes
\begin{align*}
  h''_B(u) = \frac{\delta_{ij}}{u_i} + \frac{1}{u_0}
  \quad\mbox{for }i,j=1,\ldots,n.
\end{align*}

\begin{lemma}\label{lem.hA}
Let the symmetric part of $(q_{ij}+r_{ij})_{i,j=1}^n$ be positive definite with smallest eigenvalue $\alpha>0$ and let $q_{ij}\ge r_{ij}$ for $i,j=1,\ldots,n$. Then
\begin{align*}
  z^Th_B''(u)A(u)z \ge \alpha|z|^2 \quad\mbox{for all }u\in\dom,\ 
  z\in\R^n.
\end{align*}
\end{lemma}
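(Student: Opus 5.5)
The plan is to carry out the entropy-production computation from the proof of Lemma \ref{lem.hB} pointwise, with the gradients $\na u_i$ replaced by an arbitrary vector $z\in\R^n$. The computation there is purely algebraic at each point: it uses only the definitions of the fluxes and the relation $\sum_{i=0}^n\na u_i=0$. So it should turn into an exact formula for the quadratic form $z^Th_B''(u)A(u)z$.

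\emph{Step 1: set up the $(n+1)$-species form.} Fix $u\in\dom$ and $z\in\R^n$, and put $z_0:=-\sum_{i=1}^n z_i$, which mirrors $\na u_0=-\sum_{i=1}^n\na u_i$. From the formula for $h_B''(u)$,
\begin{align*}
  (h_B''(u)z)_i = \frac{z_i}{u_i}-\frac{z_0}{u_0}, \quad i=1,\ldots,n.
\end{align*}
For $i=0,\ldots,n$ define the ``fluxes''
\begin{align*}
  \widehat J_i := q_i(u)z_i + u_i\na q_i\cdot z - u_i\sum_{j}(\cdots)
  - \big(r_i(u)z_i - u_i(\na r_i\cdot z)\big) + u_i\sum_j(\cdots).
\end{align*}
These are the right-hand sides of \eqref{1.samek}, read as linear expressions in $z$, with $\na q_i(u)$ replaced by $\sum_j q_{ij}z_j$ and similarly for $r_i$. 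By \eqref{4.A}, $\widehat J_i=(A(u)z)_i$ for $i=1,\ldots,n$.

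The total flux vanishes: summing $\widehat J_i$ over $i=0,\ldots,n$ gives zero, because $\sum_{i=0}^n u_i=1$. This yields $\widehat J_0=-\sum_{i=1}^n\widehat J_i$, and therefore
\begin{align*}
  z^Th_B''(u)A(u)z=\sum_{i=1}^n\Big(\frac{z_i}{u_i}-\frac{z_0}{u_0}\Big)\widehat J_i
  =\sum_{i=0}^n\frac{z_i}{u_i}\widehat J_i .
\end{align*}

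\emph{Step 2: expand as in Lemma \ref{lem.hB}.} Expand $\sum_{i=0}^n \frac{z_i}{u_i}\widehat J_i$ exactly as in the proof of Lemma \ref{lem.hB}. The terms carrying a factor $u_i\sum_j(\cdots)$ produce $\big(\sum_{i=0}^n z_i\big)(\cdots)=0$. Using the symmetry $r_{ij}=r_{ji}$, what remains should be
\begin{align*}
  z^Th_B''(u)A(u)z=\sum_{i,j=0}^n(q_{ij}+r_{ij})z_iz_j+\sum_{i=0}^n\big(q_i(u)-r_i(u)\big)\frac{z_i^2}{u_i}.
\end{align*}

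\emph{Step 3: conclude.} Since $q_{i0}=q_{0i}=r_{i0}=0$, the first sum runs only over $i,j=1,\ldots,n$. It is therefore $\ge\alpha|z|^2$ by the positive definiteness of the symmetric part of $(q_{ij}+r_{ij})_{i,j=1}^n$. In the second sum, the $i=0$ term vanishes because $q_0=r_0=0$. For $i\ge1$,
\begin{align*}
  q_i(u)-r_i(u)=\sum_{j=1}^n(q_{ij}-r_{ij})u_j\ge0,
\end{align*}
by the hypothesis $q_{ij}\ge r_{ij}$ and $u\in\dom$. Hence the second sum is nonnegative, and the claimed inequality $z^Th_B''(u)A(u)z\ge\alpha|z|^2$ follows.

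The main obstacle is the bookkeeping in Step 2, in particular two points. First, the cross terms $u_i\na q_i\cdot z$ and $u_i\na r_i\cdot z$ must combine into $(q_{ij}+r_{ij})z_iz_j$ with the correct signs: the $r$-term enters with the opposite sign in the flux, and its $\frac{z_i}{u_i}\cdot u_i$ factor cancels the $u_i$. Second, one must check that the identity $\widehat J_0=-\sum_{i\ge1}\widehat J_i$ really holds for the algebraic version, i.e.\ that it uses nothing beyond $\sum_{i=0}^n u_i=1$ and $\sum_{i=0}^n z_i=0$. If the direct expansion gets messy, I would instead verify the formula directly from \eqref{4.A}, computing $\sum_{i=1}^n\big(\frac{z_i}{u_i}-\frac{z_0}{u_0}\big)A_{ij}z_j$ term by term.
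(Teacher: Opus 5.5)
Your proposal is correct and is essentially the paper's argument. The paper verifies by direct computation that $h_B''(u)A(u)$ has entries $\delta_{ij}(q_i(u)-r_i(u))/u_i+q_{ij}+r_{ij}$, which is exactly the quadratic-form identity you reach in Step 2, and then concludes as in your Step 3 using $q_i(u)-r_i(u)\ge 0$ and the positive definiteness of the symmetric part of $(q_{ij}+r_{ij})_{i,j=1}^n$. Your lift to $n+1$ components with $z_0=-\sum_{i=1}^n z_i$ is just a tidy way to organize that computation. The cancellation it relies on uses only $\sum_{i=0}^n u_i=1$ and $\sum_{i=0}^n z_i=0$, so the symmetry $r_{ij}=r_{ji}$ is not actually needed in Step 2.
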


The lemma shows that Hypothesis (H2) in \cite{Jue15} is satisfied.

\begin{proof}
A computation shows that
\begin{align*}
  h''_B(u)A(u) = \frac{\delta_{ij}}{u_i}(q_i(u)-r_i(u))
  + q_{ij} + r_{ij}\quad\mbox{for }i,j=1,\ldots,n.
\end{align*}
We infer from our assumptions that $q_i(u)-r_i(u)\ge 0$, which yields 
\begin{align*}
  z^T h''_B(u)A(u)z = \sum_{i=1}^n(q_i(u)-r_i(u))\frac{z_i^2}{u_i}
  + \sum_{i,j=1}^n(q_{ij}+r_{ij})z_iz_j
  \ge \alpha\sum_{i=1}^n z_i^2
\end{align*}
for $z\in\R^n$, which proves the lemma.
\end{proof}

Since we do not consider reaction terms, Hypothesis (H3) in \cite{Jue15} is void. Hence, all hypotheses of \cite[Theorem 2]{Jue15} are satisfied, and we conclude the existence of a global bounded weak solution to \eqref{1.qr}--\eqref{1.samek}. The entropy inequality \eqref{1.ent} is a consequence of Lemma \ref{lem.hB}. This proves Theorem \ref{thm.ex}. 

\begin{remark}[Nonconstant coefficients]\label{rem.qr}\rm
The coefficients $q_{ij}$ and $r_{ij}$ may depend on $u$ as long as they are continuous on $\overline{\dom}$ and the matrix $(q_{ij}+r_{ij})_{i,j=1}^n$ is positive definite uniformly in $\dom$. Indeed, the continuity is required to derive the continuity of the fixed-point operator in \cite{Jue15}, while the uniform positive definiteness implies the validity of Hypothesis (H2) in \cite{Jue15}. We obtain the existence of global bounded weak solutions to \eqref{1.qr}--\eqref{1.samek} also in this situation. 
\qed\end{remark}


\subsection{Proof of Theorem \ref{thm.decay}}\label{sec.decay}

Let $u_i^\infty:=|\Omega|^{-1}\int_\Omega u_i^0 dx$ for $i=0,\ldots,n$. Using \eqref{3.ent}, we introduce the relative Boltzmann entropy
\begin{align*}
  H_B(u|u^\infty) = \int_\Omega\big(h_B(u)-h_B(u^\infty)-h'(u^\infty)\cdot(u-u^\infty)
  \big)dx = \sum_{i=0}^n\int_\Omega u_i\log\frac{u_i}{u_i^\infty}dx,
\end{align*}
where the last step follows after an elementary computation. By the chain rule (see, e.g., \cite[Lemma 1.5]{AlLu83}) and since $u^\infty$ is constant,
\begin{align*}
  \frac{d}{dt}H_B(u|u^\infty)
  &= \big\langle
  \pa_t u_i,h'_B(u)-h_B'(u^\infty)\big\rangle
  = \langle\pa_t u,h'_B(u)\rangle ds \\
  &= -\sum_{i,j=1}^n\int_\Omega (h_B''(u)A(u))_{ij}
  \na u_i\cdot\na u_j dx
  \le -\alpha\sum_{i=1}^n\int_\Omega|\na u_i|^2 dx,
\end{align*}
and we used Lemma \ref{lem.hA} in the last step. Observing that
\begin{align}\label{3.n}
  \sum_{i=0}^n\int_\Omega|\na u_i|^2 dx
  = \sum_{i=1}^n\int_\Omega|\na u_i|^2 dx
  + \bigg|\sum_{i=1}^n\int_\Omega\na u_i dx\bigg|^2
  \le (n+1)\sum_{i=1}^n\int_\Omega|\na u_i|^2 dx,
\end{align}
we obtain
\begin{align*}
  \frac{d}{dt}H_B(u|u^\infty) \le -\frac{\alpha}{n+1}
  \sum_{i=0}^n\int_\Omega|\na u_i|^2 dx.
\end{align*}
We apply the following variant of the logarithmic Sobolev inequality \cite[Theorem 1]{AbLe25}:
\begin{align*}
  \int_\Omega u_i\log\frac{u_i}{u_i^\infty}dx
  \le \frac{C_S}{u_i^\infty}\int_\Omega|\na u_i|^2 dx,
\end{align*}
where $C_S>0$ only depends on $\Omega$ and the space dimension $d$. Summing over $i=0,\ldots,n$, this yields 
\begin{align*}
  \frac{d}{dt}H_B(u|u^\infty) \le -\frac{\alpha}{C_S(n+1)}
  \Big(\min_{i=0,\ldots,n}u_i^\infty\Big) 
  H_B(u|u^\infty) \quad\mbox{for }t>0.
\end{align*}
We infer from Gronwall's lemma that
\begin{align*}
  H_B(u(t)|u^\infty) \le H_B(u^0|u^\infty)e^{-c t},
\end{align*}
where $c=\alpha (C_S(n+1))^{-1}\min_{i=0,\ldots,n}u_i^\infty>0$. The proof follows after an application of the Csisz\'ar--Kullback--Pinsker inequality with $\lambda=c/2$ \cite[Theorem A.2]{Jue16}.

Notice that our assumption that the first row and column of the matrix $(q_{ij}+r_{ij})_{i,j=0}^n$ vanish leads to a reduced decay rate; we obtain a rate with factor $\alpha/(n+1)$ instead of $\alpha$. This is not surprising. By imposing our assumption, we lose the decay information for $u_0$. This information must be recovered indirectly from $\na u_0=-\sum_{i=1}^n\na u_i$, and consequently the resulting decay rate is smaller.


\subsection{Proof of Theorem \ref{thm.wsu}}\label{sec.wsu}

Let $u$ be a weak solution and $\bar{u}$ be a strong solution to \eqref{1.bic}, \eqref{1.regBT}. The difference satisfies the equation
\begin{align*}
  \pa_t(u_i-\bar{u}_i) &= \eta\Delta(u_i-\bar{u}_i)
  + 2\diver\bigg(u_i\na q_i(u) - \bar{u}_i\na q_i(\bar{u}) \\
  &\phantom{xx}- u_i\sum_{j=1}^n
  \big(u_j\na q_j(u)-\bar{u}_j\na q_j(\bar{u})\big)\bigg), \quad
  i=0,\ldots,n.
\end{align*}
We differentiate the relative Rao entropy
\begin{align*}
  H_R(u|\bar{u}) = \frac{1}{2}\sum_{i,j=0}^n\int_\Omega q_{ij}
  (u_i-\bar{u}_i)(u_j-\bar{u}_j) dx
\end{align*}
with respect to time and insert the equation satisfied by $u_i-\bar{u}_i$:
\begin{align*}
  &\frac{d}{dt}H_R(u|\bar{u}) 
  = \sum_{i,j=0}^n q_{ij}\langle\pa_t(u_i-\bar{u}_i),
  u_j-\bar{u}_j\rangle \\
  &= \sum_{i=0}^n\langle\pa_t(u_i-\bar{u}_i),q_i(u)-q_i(\bar{u})
  \rangle = I_1+ 2I_2 + 2I_3, \\
\end{align*}
where 
\begin{align*}
  I_1 &= -\eta\sum_{i,j=0}^n\int_\Omega q_{ij}\na(u_i-\bar{u}_i)
  \cdot\na(u_j-\bar{u}_j)dx, \\
  I_2 &= -2\sum_{i=0}^n\int_\Omega
  (u_i\na q_i(u)-\bar{u}_i\na q_i(\bar{u}))
  \cdot\na(q_i(u)-q_i(\bar{u}))\big)dx, \\
  I_3 &= 2\sum_{i,j=0}^n\int_\Omega
  \big(u_iu_j\na q_j(u) - \bar{u}_i\bar{u}_j\na q_j(\bar{u})\big)
  \cdot\na(q_i(u)-q_i(\bar{u}))dx.
\end{align*}
In fact, all sums run from $i=1,\ldots,n$ and $j=1,\ldots,n$, since $q_0(u)=q_0(\bar{u})=0$ by assumption. (The proof also holds without this assumption, using inequality \eqref{3.n}.) By the positive definiteness of $(q_{ij})$,
\begin{align*}
  I_1 \le -\alpha\eta\sum_{i=1}^n\int_\Omega|\na(u_i-\bar{u}_i)|^2 dx.
\end{align*}
We add and subtract some terms in $I_2$ and $I_3$, leading to
\begin{align*}
  I_2 &= -\sum_{i=1}^n\int_\Omega u_i|\na(q_i(u)-q_i(\bar{u}))|^2dx \\
  &\phantom{xx}- \sum_{i=1}^n\int_\Omega (u_i-\bar{u}_i)\na q_i(\bar{u})
  \cdot\na(q_i(u)-q_i(\bar{u}))dx =: I_{21}+I_{22}, \\
  I_3 &= \sum_{i,j=1}^n\int_\Omega u_iu_j
  \na(q_i(u)-q_i(\bar{u}))\cdot\na(q_j(u)-q_j(\bar{u}))dx \\
  &\phantom{xx}+ \sum_{i,j=1}^n\int_\Omega (u_iu_j-\bar{u}_i\bar{u}_j)
  \na q_j(\bar{u})\cdot\na(q_i(u)-q_i(\bar{u}))dx =: I_{31} + I_{32}.
\end{align*}
It follows from H\"older's inequality for sums that $I_{21}+I_{31}\le 0$, since
\begin{align*}
  I_{31} &= \int_\Omega\bigg(\sum_{i=1}^n u_i
  \na(q_i(u)-q_i(\bar{u}))\bigg)^2 dx \\
  &\le \int_\Omega\bigg(\sum_{j=1}^n u_j\bigg)
  \bigg(\sum_{i=1}^n u_i|\na(q_i(u)-q_i(\bar{u}))|^2\bigg) dx = -I_{21}.
\end{align*}
Next, using Young's inequality with $\delta>0$ and the bound $u_i\le 1$, 
\begin{align*}
  I_{22} &\le \delta\sum_{i=1}^n\int_\Omega
  |\na(q_i(u)-q_i(\bar{u}))|^2 dx
  + C_1(\delta)\sum_{i=1}^n\int_\Omega|u_i-\bar{u}_i|^2 dx \\
  &\le C_2\delta\sum_{i=1}^n\int_\Omega|\na(u_i-\bar{u}_i)|^2 dx
  + C_1(\delta)\sum_{i=1}^n\int_\Omega|u_i-\bar{u}_i|^2 dx,
\end{align*}
where $C_1(\delta)>0$ depends on $\delta$ and the $L^\infty(\Omega_T)$ norm of $\max_i q_i(\bar{u})$ and $C_2>0$ depends on $\max_{i,j}q_{ij}$. In a similar way,
\begin{align*}
  I_{32} \le C_2\delta\sum_{i=1}^n\int_\Omega|\na(u_i-\bar{u}_i)|^2 dx
  + C_3(\delta)\sum_{i=1}^n\int_\Omega|u_i-\bar{u}_i|^2 dx.
\end{align*} 
This yields
\begin{align*}
  \frac{d}{dt}H_R(u|\bar{u}) &\le (\alpha\eta - 4C_2\delta)
  \sum_{i=1}^n\int_\Omega|\na(u_i-\bar{u}_i)|^2 dx
  + 2(C_1(\delta)+C_3(\delta))\sum_{i=1}^n
  \int_\Omega|u_i-\bar{u}_i|^2 dx.
\end{align*}
Taking into account that
\begin{align*}
  H_R(u|\bar{u}) \ge \frac{\alpha}{2}\sum_{i=1}^n\int_\Omega
  |u_i-\bar{u}_i|^2 dx
\end{align*}
and choosing $0<\delta\le \alpha\eta/(4C_2)$, we conclude that
\begin{align*}
  \frac{d}{dt}H_R(u|\bar{u}) \le \frac{4}{\alpha}
  (C_1(\delta)+C_3(\delta))H_R(u|\bar{u}).
\end{align*}
We deduce from the fact that $u$ and $\bar{u}$ have the same initial data that $H_R(u(0)|\bar{u}(0))=0$. Therefore, it follows from Gronwall's lemma that $H_R(u(t)|\bar{u}(t))=0$ for $t>0$ and consequently $u_i(t)=\bar{u}_i(t)$ in $\Omega$ for $t>0$ and $i=1,\ldots,n$. The theorem is proved.


\subsection{Tumor-growth model of Jackson and Byrne}\label{sec.tg}

The tumor-growth model of \cite{JaBy02} is given by \eqref{1.qr}--\eqref{1.samek} with $n=2$, $r_{ij}=0$, and
\begin{align*}
  q_{11} = 1, \quad q_{12} = 0, \quad q_{21} = \beta\theta u_2,
  \quad q_{22} = \beta,
\end{align*}
where $\beta>0$ and $\theta>0$. The matrix $(q_{ij})$ is positive definite if $\theta<2/\sqrt{\beta}$, since 
\begin{align*}
  q_{11}z_1^2 + (q_{12}+q_{21})z_1z_2 + q_{22}z_2^2
  = z_1^2 + \beta\theta u_2z_1z_2 + \beta z_2^2> 0
\end{align*}
for all $z\in\R^2$, $z\neq 0$ if and only if $4>\beta\theta^2u_2^2$, which is the case if $\theta<2/\sqrt{\beta}$. If $\theta$ is sufficiently large, the problem may be ill-posed, as shown in the following lemma.

\begin{lemma}
Let $\beta>0$ and let $\theta>0$ be sufficiently large. Then there exists $(u_1,u_2)\in\dom$, satisfying $u_1\in(0,1/3)$, $u_2\in(2/3,1)$ such that at least one eigenvalue of $A(u)$ has a negative real part.
\end{lemma}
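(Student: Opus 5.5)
The plan is to compute the $2\times 2$ diffusion matrix $A(u)$ explicitly and use the elementary fact that a real $2\times2$ matrix with positive determinant and negative trace has two eigenvalues with negative real parts. Formula \eqref{4.A} was derived for constant coefficients, while here $q_{21}=\beta\theta u_2$ depends on $u$. I will therefore differentiate the fluxes in \eqref{1.samek} directly rather than use \eqref{4.A}.

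\emph{Step 1: factor $A(u)$.} With $r_{ij}=0$ and $q_0=0$, the fluxes read $J_i=-\big(\na f_i(u)-u_i\na(f_1(u)+f_2(u))\big)$ for $i=1,2$, where
\begin{align*}
  f_1(u)=u_1q_1(u)=u_1^2,\qquad f_2(u)=u_2q_2(u)=\beta(\theta u_1u_2^2+u_2^2).
\end{align*}
Hence $A(u)=M(u)F(u)$, with $M(u)=I-u\mathbf{1}^T$ and $F=(\pa f_i/\pa u_j)_{i,j=1}^2$. Explicitly,
\begin{align*}
  M=\begin{pmatrix}1-u_1 & -u_1\\ -u_2 & 1-u_2\end{pmatrix},\qquad
  F=\begin{pmatrix}2u_1 & 0\\ \beta\theta u_2^2 & 2\beta u_2(1+\theta u_1)\end{pmatrix}.
\end{align*}

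\emph{Step 2: the determinant is positive.} We have $\det M=1-u_1-u_2=u_0>0$ in $\dom$ and $\det F=4\beta u_1u_2(1+\theta u_1)>0$. Therefore $\det A(u)>0$ for every $u\in\dom$ and every $\theta>0$. It follows that the eigenvalues are either real of the same sign or complex conjugate, and in both cases the sign of their real parts equals the sign of $\operatorname{tr}A(u)$.

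\emph{Step 3: the trace is negative for large $\theta$.} Multiplying out the diagonal entries gives
\begin{align*}
  \operatorname{tr}A(u) = 2u_1(1-u_1)+2\beta u_2(1-u_2)
  + \theta\beta u_1u_2(2-3u_2).
\end{align*}
This expression is affine in $\theta$, with slope $\beta u_1u_2(2-3u_2)$, which is negative exactly when $u_2>2/3$. Fix any $u\in\dom$ with $u_1\in(0,1/3)$, $u_2\in(2/3,1)$ and $u_1+u_2<1$; for example $u_1=1/6$, $u_2=3/4$. Then $\operatorname{tr}A(u)<0$ as soon as
\begin{align*}
  \theta>\frac{2u_1(1-u_1)+2\beta u_2(1-u_2)}{\beta u_1u_2(3u_2-2)}.
\end{align*}
By Step 2, both eigenvalues of $A(u)$ then have negative real parts, which is more than the lemma requires.

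The only real obstacle is getting $A(u)$ right. The $u$-dependence of $q_{21}$ produces the term $\beta\theta u_2^2$ in $F$, which is absent if one naively applies \eqref{4.A}. I will therefore double-check the trace formula against the explicit system \eqref{1.tg} before concluding. Beyond that, the argument is a direct sign analysis.
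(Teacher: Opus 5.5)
Your proposal is correct and follows the paper's argument: you show $\det A(u)>0$ on $\dom$ and $\operatorname{tr}A(u)=2u_1(1-u_1)+2\beta u_2(1-u_2)+\beta\theta u_1u_2(2-3u_2)<0$ at a point with $u_2>2/3$ once $\theta$ is large. The paper uses the family $u_2=2/3+\eps$, $u_1=1/3-2\eps$ where you fix $(1/6,3/4)$, and your determinant $4\beta u_0u_1u_2(1+\theta u_1)$ is the accurate one (the paper's formula drops the harmless factor $u_1u_2$). One small correction, which affects only your closing remark and not the proof since you differentiate $f_2$ directly: a naive use of \eqref{4.A} would still give the entry $\beta\theta u_2^2$ in position $(2,1)$ of $F$. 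What it actually loses is the extra $\beta\theta u_1u_2$ in $F_{22}$, which would produce the wrong slope factor $1-2u_2$ in the trace.
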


\begin{proof}
We recall that the eigenvalues of the $2\times 2$ matrix $A(u)$ have positive real parts if $\det A(u)>0$ and $\operatorname{tr}A(u)>0$. A computation shows that for $u\in\dom$,
\begin{align*}
  \operatorname{tr}A(u) &= \beta\theta u_1 u_2(2-3u_2) 
  + 2\beta u_2(1-u_2) + 2u_1(1-u_1), \\
  \det A(u) &= 4\beta(1+\theta u_1)(1-u_1-u_2) > 0.
\end{align*}
We have $\operatorname{tr}A(u)>0$ if $u_2<2/3$. Thus, to obtain negative real parts, we need to assume that $u_2>2/3$. Let $0<\eps<1/6$, $u_2=2/3+\eps$, and $u_1=1/3-2\eps$. Then $(u_1,u_2)\in\dom$ and
\begin{align*}
  \operatorname{tr}A(u) = -3\eps\beta\theta\bigg(\frac13 - 2\eps\bigg)
  \bigg(\frac23 + \eps\bigg) + 2\beta\bigg(\frac23 + \eps\bigg)
  \bigg(\frac13 - \eps\bigg) + 2\bigg(\frac13 - 2\eps\bigg)
  \bigg(\frac23 + 2\eps\bigg).
\end{align*}
For any given $\beta>0$, there exists $\theta>0$ (depending on $\eps$) such that $\operatorname{tr}A(u)<0$. We infer that the real part of $\lambda_2$ is negative.
\end{proof}


\section{Mathematical analysis: different drag coefficients} \label{sec.unsamek}

Let $K(u)$ be the matrix that satisfies
\begin{align*}
  \sum_{j=0}^n k_{ij}u_iu_j(v_i-v_j)
  = \bigg(\sum_{j=0}^n k_{ij}u_j\bigg)J_i
  - \sum_{j=0}^n k_{ij}u_iJ_j = (K(u)J)_i, \quad i=1,\ldots,n.
\end{align*}
Observing that $J_0=-\sum_{i=1}^n J_i$ and $u_0=1-\sum_{i=1}^n u_i$, we compute
\begin{align}\label{5.K}
  K_{ij}(u) = \begin{cases}
  \sum_{\ell=0}^n k_{i\ell}u_\ell + k_{i0}u_i &\mbox{for }i=j, \\
  (k_{i0}-k_{ij})u_i &\mbox{for }i\neq j,
  \end{cases}
\end{align}
recalling that $k_{ii}=0$. Then we can formulate equations \eqref{1.mass}--\eqref{1.mom} as
\begin{align*}
  \pa_t u_i + \diver J_i = 0, \quad K(u)J = -A(u)\na u.
\end{align*}
It is shown in \cite[Lemma 2.3]{JuSt13} that $K(u)$ is invertible for $u\in\dom$. Therefore,
\begin{align*}
  \pa_t u - \diver(K(u)^{-1}A(u)\na u) = 0.
\end{align*}
It turns out that the diffusion matrix $K(u)^{-1}A(u)$ is positively stable, i.e., the eigenvalues have positive real parts.

\begin{proposition}\label{prop.posstab}
Let the assumptions of Theorem \ref{thm.ex} hold. Then $K(u)^{-1}A(u)$ is positively stable. 
\end{proposition}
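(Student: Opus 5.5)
Fix $u\in\dom$ and factor the diffusion matrix through the Hessian of the Boltzmann entropy:
\begin{align*}
  K(u)^{-1}A(u) = \big(K(u)^{-1}h_B''(u)^{-1}\big)\big(h_B''(u)A(u)\big)
  = \big(h_B''(u)K(u)\big)^{-1}\big(h_B''(u)A(u)\big) =: M^{-1}B .
\end{align*}
Lemma \ref{lem.hA} gives that $B=h_B''(u)A(u)$ is positive definite, with symmetric part $\ge\alpha I$. The plan is to show that $M=h_B''(u)K(u)$ is \emph{symmetric} and positive definite. Then the standard linear-algebra argument applies. If $M^{-1}Bz=\lambda z$ with $z\in\mathbb{C}^n\setminus\{0\}$, then $Bz=\lambda Mz$. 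Taking the inner product with $\bar z$ gives $\bar z^TBz=\lambda\,\bar z^TMz$. Since $M$ is symmetric positive definite, $\bar z^TMz>0$ is real. Moreover,
\begin{align*}
  \operatorname{Re}(\bar z^TBz)=\bar z^T\tfrac12(B+B^T)z\ge\alpha|z|^2>0 .
\end{align*}
Hence $\operatorname{Re}\lambda>0$.

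The key step is to identify $M$. Work in the $(n+1)$-species picture. The drag form is $\sum_{j}k_{ij}u_iu_j(v_i-v_j)$. Pairing it with test velocities and symmetrizing, using $k_{ij}=k_{ji}$, gives
\begin{align*}
  \sum_{i,j=0}^n k_{ij}u_iu_j(v_i-v_j)\cdot w_i
  =\frac12\sum_{i,j=0}^n k_{ij}u_iu_j(v_i-v_j)\cdot(w_i-w_j).
\end{align*}
This is a symmetric nonnegative form in the velocities, and it vanishes only when all velocities are equal. In flux variables $J_i=u_iv_i$ with the constraint $\sum_{i=0}^nJ_i=0$, the natural pairing is
\begin{align*}
  \sum_{i=0}^n (KJ)_i\cdot \frac{\tilde J_i}{u_i}.
\end{align*}
After eliminating index $0$ via $J_0=-\sum_{i\ge1}J_i$ and $u_0=1-\sum_{i\ge1} u_i$, this weight matrix is exactly $h_B''(u)=\operatorname{diag}(1/u_i)+\frac{1}{u_0}\mathbf 1\mathbf 1^T$. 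I will therefore verify directly from \eqref{5.K} that
\begin{align*}
  (h_B''K)_{ij}=\frac{K_{ij}}{u_i}+\frac1{u_0}\sum_{\ell=1}^nK_{\ell j}
\end{align*}
is symmetric in $i,j$. The off-diagonal part $(k_{i0}-k_{ij})$ is symmetric up to terms depending only on $i$ or only on $j$, and the column sums $\sum_\ell K_{\ell j}$ should combine with these to cancel the asymmetric pieces.

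Positive definiteness of $M$ then follows from the symmetrized identity above. For $J\ne0$ with $J_0=-\sum J_i$, set $v_i=J_i/u_i$ for $i=0,\dots,n$. Then
\begin{align*}
  J^TMJ=\frac12\sum_{i,j=0}^nk_{ij}u_iu_j|v_i-v_j|^2 .
\end{align*}
This is $\ge0$, and it is zero only if all $v_i$ coincide. In that case $\sum_{i=0}^n u_iv_i=0$ forces all $v_i$ to vanish, so $J=0$. Since all $k_{ij}>0$ for $i\ne j$, strict positivity holds.

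The main obstacle is the bookkeeping in the symmetry check of $h_B''(u)K(u)$ and in confirming that the reduction from $n+1$ to $n$ components turns the pairing $\sum_{i=0}^n J_i\cdot\tilde J_i/u_i$ exactly into $J^Th_B''(u)\tilde J$. I would do this by first writing the $(n+1)$-dimensional form in terms of $J_0,\dots,J_n$ and then substituting $J_0$, rather than by manipulating \eqref{5.K} entrywise.
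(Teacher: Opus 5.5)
Your proposal is correct and follows the paper's proof: the same factorization $K(u)^{-1}A(u)=\big(K(u)^{-1}h_B''(u)^{-1}\big)\big(h_B''(u)A(u)\big)$. You prove directly the two ingredients the paper only cites, namely the symmetric positive definiteness of $h_B''(u)K(u)$ from \cite{JuSt13} and the positive stability of such products from \cite{ChJu21}; your entrywise symmetry check and the identity $J^TMJ=\frac12\sum_{i,j=0}^n k_{ij}u_iu_j|v_i-v_j|^2$ both go through. Your eigenvalue argument also needs only the symmetric part of $h_B''(u)A(u)$ to be positive definite, which is exactly what Lemma \ref{lem.hA} gives when $q_{ij}$ is nonsymmetric, whereas the paper asserts that $h_B''(u)A(u)$ is itself symmetric.
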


\begin{proof}
It is proved in \cite[Lemma 2.4]{JuSt13} that the matrix $K(u)^{-1}h_B''(u)^{-1}$ is symmetric positive definite for $u\in\dom$. The proof of Theorem \ref{thm.ex} implies that $h_B''(u)A(u)$ is symmetric positive definite. By \cite[Prop.~2]{ChJu21}, the product $K^{-1}(u)A(u) = (K^{-1}(u)h_B''(u)^{-1})(h_B''(u)A(u))$ is positively stable (but generally neither symmetric nor positive definite). 
\end{proof}

\begin{proposition}\label{prop.perturb}
Let the assumptions of Theorem \ref{thm.ex} hold. Let $ k_{ij} = 1 + \eps k_{ij}^*u_iu_j$ for symmetric numbers $k_{ij}^*>0$ and $\eps>0$, where $i,j=1,\ldots,n$. There exists $\eps_0>0$ such that for $0<\eps\le\eps_0$, the matrix $h_B''(u)K(u)^{-1}A(u)$ is positive definite for $u\in\dom$. In particular, equations \eqref{1.mass}--\eqref{1.bic} possesses a Boltzmann entropy structure and we conclude the existence of global bounded weak solutions to \eqref{1.mass}--\eqref{1.bic}.
\end{proposition}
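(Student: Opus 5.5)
We prove the statement by a perturbation argument around the case $\eps=0$, where $k_{ij}=1$ and Lemma \ref{lem.hA} already gives coercivity. The key point is that the perturbation in $K(u)$ is itself degenerate, carrying factors $u_iu_j$. These factors compensate the singular weights $1/u_i$ and $1/u_0$ in $h_B''(u)$, so the error stays bounded uniformly on $\dom$.

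First, with $k_{ij}=1$ for $i\neq j$, formula \eqref{5.K} gives $K_{ij}(u)=\sum_{\ell\ne i}u_\ell+u_i=1$ on the diagonal and $0$ off the diagonal. Hence $K(u)=I$ when $\eps=0$. In general we write
\begin{align*}
  K(u)=I+\eps M(u),
\end{align*}
where $M$ is read off from \eqref{5.K} with $k_{ij}-1=\eps k^*_{ij}u_iu_j$. We adopt the convention that $k_{i0}$ takes the same form, $k_{i0}=1+\eps k^*_{i0}u_iu_0$; the statement only lists $i,j\ge 1$, so if instead $k_{i0}=1$ we simply set $k^*_{i0}=0$. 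Then
\begin{align*}
  M_{ii}&=\sum_{\ell} k^*_{i\ell}u_iu_\ell^2+k^*_{i0}u_i^2u_0, \\
  M_{ij}&=(k^*_{i0}u_0-k^*_{ij}u_j)u_i^2 \quad\mbox{for }i\neq j.
\end{align*}
The essential observation is that every entry of row $i$ of $M$ carries the factor $u_i^2$, or at least $u_iu_\ell$ with an extra $u_i$. We also have $\|M(u)\|\le C$ on $\dom$.

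Second, for $\eps\le\eps_1$ the Neumann series gives
\begin{align*}
  K^{-1}=I-\eps M K^{-1}=I-\eps M+\eps^2M^2K^{-1}.
\end{align*}
Therefore
\begin{align*}
  h_B''K^{-1}A=h_B''A-\eps\, h_B''M K^{-1}A.
\end{align*}
By Lemma \ref{lem.hA}, the first term satisfies $z^Th_B''Az\ge\alpha|z|^2$. It remains to show that the perturbation is bounded,
\begin{align*}
  |z^Th_B''(u)M(u)K(u)^{-1}A(u)z|\le C|z|^2 \quad\mbox{uniformly in } u\in\dom.
\end{align*}
Since $h_B''=\operatorname{diag}(1/u_i)+\frac{1}{u_0}\mathbf{1}\mathbf{1}^T$, the product $\operatorname{diag}(1/u_i)M$ is bounded because row $i$ of $M$ has the factor $u_i$. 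The term $\frac1{u_0}\mathbf 1\mathbf 1^TM$ requires $\sum_i M_{ij}=O(u_0)$ for each column $j$. This cancellation is the main obstacle.

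To handle it, I would instead work in the original $(n+1)$-component formulation, where the structure is cleaner. The drag operator is
\begin{align*}
  (\mathcal K J)_i=\sum_j k_{ij}(u_jJ_i-u_iJ_j).
\end{align*}
Its perturbation is $\eps\sum_j k^*_{ij}u_iu_j(u_jJ_i-u_iJ_j)$. Testing against $\log u_i$ differences produces the combinations
\begin{align*}
  u_iu_j(u_jJ_i-u_iJ_j)\cdot\Big(\frac{\na u_i}{u_i}-\frac{\na u_j}{u_j}\Big),
\end{align*}
which are bounded by $|J|\,|\na u|$ with no singular factor. This is exactly the mechanism in the proof of Lemma \ref{lem.comb}. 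Concretely, I would compute
\begin{align*}
  \frac{d}{dt}\int_\Omega h_B\,dx=\sum_i\int_\Omega J_i\cdot\na\log u_i\,dx.
\end{align*}
Using the flux relation $J+\eps\,(\mbox{perturbation})=-A\na u$, this becomes $-\na u^T h_B''A\na u$ plus $\eps$ times the symmetrized term above. That term is bounded by $C\eps|J||\na u|$, and $|J|\le C|\na u|$ because $K^{-1}$ and $A$ are bounded on $\dom$. Choosing $\eps_0$ with $C\eps_0\le\alpha/2$ gives
\begin{align*}
  z^Th_B''K^{-1}Az\ge\frac{\alpha}{2}|z|^2.
\end{align*}
This holds pointwise as a quadratic-form statement, by replacing $\na u$ with an arbitrary $z\in\R^n$.

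Finally, this uniform positive definiteness is Hypothesis (H2) of \cite{Jue15}. Hypothesis (H1) is unchanged from Section \ref{sec.ex}, and the entries of $K^{-1}A$ are continuous on $\overline{\dom}$. Therefore \cite[Theorem 2]{Jue15} yields global bounded weak solutions, as in Theorem \ref{thm.ex}.
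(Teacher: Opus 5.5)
Your proof is correct. Its skeleton is the paper's: write $K(u)=\mathbb{I}+\eps M(u)$ (your $M$ is the paper's $K^*$), invert by a Neumann series, and absorb the $O(\eps)$ remainder into the constant $\alpha$ of Lemma \ref{lem.hA}.

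You differ only in the one nontrivial step, the uniform bound on $h_B''(u)M(u)$. The paper simply asserts that the entries of $h_B''K^*$ are polynomials in $u$. As you correctly observed, this holds only because of the cancellation $\sum_{i=1}^n M_{ij}=O(u_0)$ in the $u_0^{-1}\mathbf{1}\mathbf{1}^T$ part of $h_B''$, and the paper never checks it.

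You obtain the bound instead from the antisymmetric $(n+1)$-component drag structure. With $z_0=-\sum_{i\ge1}z_i$ and $w_0=-\sum_{i\ge1}w_i$, your symmetrization amounts to the pointwise identity
\begin{align*}
  z^Th_B''(u)M(u)w = \frac12\sum_{i,j=0}^n k^*_{ij}(u_jw_i-u_iw_j)(u_jz_i-u_iz_j).
\end{align*}
The right-hand side is manifestly bounded by $C|z||w|$ on $\dom$. Applied with $w=K(u)^{-1}A(u)z$, it is exactly the mechanism of Lemma \ref{lem.comb}, which you cite.

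The same antisymmetry would also have closed your first route in one line. Summing the $n+1$ drag equations gives
\begin{align*}
  \mathbf{1}^TM(u)w=-\sum_{j=0}^n k^*_{0j}u_0u_j(u_jw_0-u_0w_j),
\end{align*}
which carries the factor $u_0$. So the obstacle you flagged is real but resolvable directly.

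The paper's version is shorter. Yours is self-contained: it shows exactly where the symmetry $k^*_{ij}=k^*_{ji}$ and your convention for $k^*_{i0}$ enter. It also explains why the degenerate factor $u_iu_j$ in the perturbation is indispensable, which the paper only remarks on in passing.
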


\begin{proof}
We compute for $i=1,\ldots,n$,
\begin{align*}
  \sum_{j=0}^n k_{ij}u_iu_j(v_i-v_j)
  = J_i + \eps\bigg(\sum_{j=0}^n k_{ij}^*u_iu_j^2\bigg)J_i
  - \eps\sum_{j=0}^n k_{ij}^*u_i^2u_j J_j = (K(u)J)_i,
\end{align*}
where $J_i=u_iv_i$, $K(u) = \mathbb{I} + \eps K^*(u)$, $\mathbb{I}$ is the unit matrix in $\R^{n\times n}$, and
\begin{align*}
  K_{ij}^*(u) = \begin{cases}
  \sum_{\ell=0}^n k_{i\ell}u_iu_\ell^2 + k_{i0}u_0u_i^2
  &\mbox{for }i=j, \\
  (k_{i0}u_0-k_{ij}u_j)u_i^2 &\mbox{for }i\neq j.
  \end{cases}
\end{align*}
Then the entries of the product $h_B''(u)K^*(u)$ are polynomials in $u$, and there exists $C_1>0$ such that $\|h_B''(u)K^*(u)\|\le C_1$ for $u\in\dom$, where $\|\cdot\|$ is the Frobenius matrix norm. (Observe that we cannot achieve the boundedness if we perturb $k_{ij}$ by $\eps k_{ij}^*$; we need the factor $u_iu_j$.) Also the entries of $A(u)$ and $K^*(u)$ are polynomials, so there exists $C_2>0$ such that $\|A(u)\|\le C_2$ and $\|K^*(u)\|\le C_2$ for $u\in\dom$. Thus, choosing $\eps\le (2C_2)^{-1}$, the matrix $K(u)$ is invertible and
\begin{align*}
  (\mathbb{I}+\eps K^*(u))^{-1} 
  = \sum_{\ell=0}^\infty\eps^\ell K^*(u)^\ell
  = \mathbb{I} 
  + \eps K^*(u)\sum_{\ell=0}^\infty\eps^{\ell} K^*(u)^{\ell} .
\end{align*}
We compute for $z\in\R^n$,
\begin{align*}
  & z^Th_B''(u)K(u)^{-1}A(u)z = z^Th_B''(u)A(u)z + \eps z^TR(u)z, \\
  & \mbox{where }
  R(u) := h_B''(u)K^*(u)\bigg(\sum_{\ell=0}^\infty
  \eps^\ell K^*(u)^{\ell}\bigg)A(u).
\end{align*}
We estimate the remainder matrix $R(u)$ for $u\in\dom$:
\begin{align*}
  \|R(u)\| &\le \|h''_B(u)K^*(u)\|
  \bigg\|\sum_{\ell=0}^\infty\eps^\ell K^*(u)^\ell\bigg\| \|A(u)\| \\
  &\le \frac{\|h_B''(u)K^*(u)\|\|A(u)\|}{1 - \eps\|K^*(u)\|}
  \le 2C_1C_2 < \infty
\end{align*}
using $\eps\|K^*(u)\|\le 1/2$ in the last step. Choosing additionally $\eps\le \alpha(4C_1C_2)^{-1}$, it follows from the positive definiteness of $h''_B(u)A(u)$ that 
\begin{align*}
  z^Th_B''(u)K(u)^{-1}A(u)z \ge \alpha|z|^2 - \eps\|R(u)\||z|^2
  \le (\alpha - 2\eps C_1C_2)|z|^2 \le \frac{\alpha}{2}|z|^2.
\end{align*}
This finishes the proof. 
\end{proof}

We can allow for coefficients $k_{ij}$ that are not close to each other; we show this for the case $n=2$. 

\begin{proposition}\label{prop.G}
Let $n=2$, $k_{ij}>0$, $q_{ij}>0$, and $r_1=r_2=0$. Then the matrix $G(u):=h_B''(u)K(u)^{-1}A(u)$ is positive definite uniformly in $u\in\dom$ if:
\begin{itemize}
\item Case $k_{01}<k_{12}<k_{02}$ and $k_{02}\le k_{01}+k_{12}$:
\begin{equation}\label{4.case1}
\begin{aligned}
  & k_{12}q_{12} + (k_{01}+k_{12}-k_{02})q_{21}
  > 2(k_{02}-k_{12})q_{22}, \\
  & 2(k_{01}+k_{12})q_{11} > k_{02}q_{12} + (2k_{02}-k_{12})q_{21}, \\
  & 4k_{01}q_{22} > 2(k_{12}-k_{01})q_{11} + (k_{02}+k_{12})q_{12}
  + (k_{01}+k_{12})q_{21};
\end{aligned}
\end{equation}
\item Case $k_{02}<k_{01}<k_{12}$: 
\begin{align*}
  & 4k_{02} > k_{02}q_{12} + 2(k_{01}+k_{12})q_{21} 
  + 2(k_{12}-k_{02})q_{22}, \\
  & 2\min\{2k_{01},k_{02}+k_{12}\}q_{22} > 2(k_{12}-k_{01})q_{11}
  + (k_{02}+k_{12})q_{12} + (k_{01}+k_{12})q_{21};
\end{align*}
\item Case $k_{12}<k_{01}<k_{02}$:
\begin{align*}
  & k_{12}q_{12} + (k_{01}+k_{12}-k_{02})q_{21} > 2(k_{01}-k_{12})q_{11}
  + 2(k_{02}-k_{12})q_{22}, \\
  & 2(k_{01}+k_{12})q_{11} > k_{02}q_{12} + (2k_{02}-k_{12})q_{21}, \\
  & 2\min\{2k_{01},k_{02}+k_{12}\}q_{22} > k_{02}q_{12}
  + (k_{01}+k_{12})q_{21}.
\end{align*}
\end{itemize}
The matrix $G(u)$ is uniformly positive definite also in the cases
\begin{align*}
  k_{02}<k_{12}<k_{01}\mbox{ and }k_{01}\le k_{02}+k_{12}; \quad
  k_{02}<k_{01}<k_{12}; \quad k_{12}<k_{02}<k_{01},
\end{align*}
if we swap $k_{01}\leftrightarrow k_{02}$, $q_{12}\leftrightarrow q_{21}$, and $q_{11}\leftrightarrow q_{22}$ in the stated inequalities. 
\end{proposition}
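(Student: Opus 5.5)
The plan is to reduce positive definiteness of $G(u)=h_B''(u)K(u)^{-1}A(u)$ to a sign condition on a polynomial quadratic form. Let $K(u)$ be the $2\times2$ matrix from \eqref{5.K}. Then $K(u)^{-1}=\operatorname{adj}K(u)/\det K(u)$, and $\det K(u)>0$ on $\dom$ (as in \cite[Lemma 2.3]{JuSt13}). It therefore suffices to show that the symmetric part of
\begin{align*}
  \widetilde G(u):=h_B''(u)\operatorname{adj}K(u)A(u)
\end{align*}
is positive definite, with the quantitative bound
\begin{align*}
  z^T\widetilde G(u)z\ge c\det K(u)|z|^2 .
\end{align*}
Since $\det K(u)$ is bounded above and below on $\overline{\dom}$ (it is a sum of products $k_{ij}u_\ell u_m$ which, thanks to $u_0+u_1+u_2=1$, never vanish simultaneously), uniformity follows from this bound.

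With $r_1=r_2=0$, formula \eqref{4.A} gives
\begin{align*}
  A_{ij}(u)=q_i(u)\delta_{ij}+u_i\big(q_{ij}-q_j(u)-\textstyle\sum_\ell u_\ell q_{\ell j}\big),
\end{align*}
and $h_B''(u)_{ij}=\delta_{ij}/u_i+1/u_0$. The key algebraic step is to compute $\widetilde G$ explicitly. I expect the apparent singular factors $1/u_i$ and $1/u_0$ to cancel partially, exactly as in Lemma~\ref{lem.hA}, where $h_B''A=\operatorname{diag}(q_i/u_i)+(q_{ij})$. Concretely, I would write $\operatorname{adj}K(u)=\mathbb{I}\cdot\operatorname{tr}K-K$ and use $h_B''K^{-1}h_B''^{-1}$-type identities (the symmetric matrix $K^{-1}h_B''^{-1}$ from \cite[Lemma 2.4]{JuSt13}). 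This should express the symmetric part of $\widetilde G$ as a $2\times2$ symmetric matrix with entries $M_{11},M_{12},M_{22}$. These entries are rational in $u$, with denominators only $u_1,u_2$ appearing in diagonal terms multiplied by $q_i(u)$.

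Positive definiteness of a $2\times 2$ symmetric matrix is equivalent to $M_{11}>0$ and $M_{11}M_{22}-M_{12}^2>0$. Instead of working with the determinant directly, I would aim for a diagonal-dominance type criterion: for instance, $M_{11}>|M_{12}|$ and $M_{22}>|M_{12}|$ after suitable weighting, or writing the form as a sum of squares plus a remainder. After clearing denominators, each inequality becomes a polynomial inequality in $(u_1,u_2)$ on the simplex, of degree at most about $3$, with coefficients linear in the $q_{ij}$ and $k_{ij}$.

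Such polynomials are checked by expanding in the barycentric monomials $u_0^a u_1^b u_2^c$ with $a+b+c$ fixed (homogenize using $u_0+u_1+u_2=1$). It suffices to show that every coefficient is nonnegative and that the coefficients of the vertex monomials are strictly positive. The coefficients are exactly linear combinations such as $k_{12}q_{12}+(k_{01}+k_{12}-k_{02})q_{21}-2(k_{02}-k_{12})q_{22}$. The orderings of $k_{01},k_{12},k_{02}$ determine which differences $k_{ij}-k_{i\ell}$ are positive, and hence which monomial coefficients are automatically nonnegative and which require one of the stated inequalities. This produces the case split of the statement; the extra condition $k_{02}\le k_{01}+k_{12}$ handles the sign of $k_{01}+k_{12}-k_{02}$. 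The remaining orderings follow by the symmetry of relabeling species $1\leftrightarrow2$, which swaps $k_{01}\leftrightarrow k_{02}$, $q_{11}\leftrightarrow q_{22}$ and $q_{12}\leftrightarrow q_{21}$ while fixing $k_{12}$.

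I expect the main obstacle to be the bookkeeping of the explicit computation of $\widetilde G$. In particular, one has to verify that after multiplying by $u_1u_2u_0$ no negative terms survive near the faces of the simplex (the vacuum regions $u_i\to0$). This is where the degenerate pressure and the nondegenerate drag compete (Remark~\ref{rem.comb}), and where the $\min\{2k_{01},k_{02}+k_{12}\}$ terms arise from bounding mixed coefficients on edges. I would first try the homogenized coefficient check; if some coefficients have indefinite sign, I would fall back on edge-wise checks combined with convexity or AM--GM along segments.
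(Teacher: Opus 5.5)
\paragraph{Verdict.} There is a genuine gap. Your outline matches the paper's: compute $G$ explicitly, check positivity on $\overline{\dom}$ at the vertices, and relabel $1\leftrightarrow 2$ for the remaining orderings. But the proposal stops exactly where the proof's content lies, and the one concrete verification device you commit to would fail.

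\paragraph{The missing structure.} The paper writes $z^TG(u)z=a(u)z_1^2+b(u)z_2^2+c(u)(z_1+z_2)^2$. So $c$ is the symmetrized off-diagonal entry, $a=G_{11}-c$ and $b=G_{22}-c$. Each of $a,b,c$ is linear in the $q_{ij}$. The coefficient of each $q_{ij}$ is then bounded below \emph{separately} by its minimum over the vertices of $\overline{\dom}$. These minima sit at different vertices for different $q_{ij}$: in $a$, the $q_{11}$-coefficient is minimized at $u_1=1$ but the $q_{21}$-coefficient at $u_2=1$. The three inequalities of the first case are precisely the resulting conditions $c>0$, $a>0$, $b>0$.

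Consequences for your plan:
\begin{itemize}
\item The hypotheses are not ``exactly'' Bernstein (vertex) coefficients of some polynomial; they are sums of separately minimized bounds.
\item The determinant criterion $M_{11}>0$, $\det M>0$ would give conditions quadratic in the $q_{ij}$, which the stated linear inequalities do not directly control.
\item Your preferred diagonal-dominance variant does work, but only once you see that the first hypothesis gives $M_{12}>0$ and the other two give $M_{11}-M_{12}>0$ and $M_{22}-M_{12}>0$.
\end{itemize}

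\paragraph{The step that fails.} Clearing denominators by $u_0u_1u_2$ produces polynomials that vanish at the vertices. The same happens if you multiply by $u_1$ alone, since $u_1\kappa G_{11}$ vanishes at the vertex $u_0=1$. So strictly positive vertex coefficients are impossible, and positivity of the cleared polynomial would not give a uniform bound anyway. The missing step is to \emph{discard} the singular contributions, $k_{02}q_{12}u_2(u_0+u_2)/u_1\ge 0$ in $a$ and $k_{01}q_{21}u_1(u_0+u_1)/u_2\ge 0$ in $b$. What remains of $a,b,c$ is affine in $u$, hence minimized at a vertex, and this gives the uniform lower bound directly.

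\paragraph{What does work.} The identity you allude to is the efficient way to obtain these coefficients. Since $h_B''K^{-1}$ is symmetric (\cite[Lemma 2.4]{JuSt13}), we have $h_B''K^{-1}=K^{-T}h_B''$, hence
\begin{align*}
  G(u) = K(u)^{-T}h_B''(u)A(u) = K(u)^{-T}\big(\operatorname{diag}(q_1(u)/u_1,\,q_2(u)/u_2)+(q_{ij})\big).
\end{align*}
Here $\kappa K^{-T}$ has rows $\big(k_{02}+(k_{12}-k_{02})u_1,\,(k_{12}-k_{02})u_2\big)$ and $\big((k_{12}-k_{01})u_1,\,k_{01}+(k_{12}-k_{01})u_2\big)$, with $\kappa=\det K(u)=k_{01}k_{02}u_0+k_{01}k_{12}u_1+k_{02}k_{12}u_2$. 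This $\kappa$ is bounded above and below on $\overline{\dom}$, as you say. The formula confirms your expectation that the only singular terms are the diagonal contributions $k_{02}q_{12}u_2/u_1$ and $k_{01}q_{21}u_1/u_2$ in $\kappa G$. It also reproduces the appendix's $a_{ij},b_{ij},c_{ij}$ in a few lines. From there, the argument above (drop the nonnegative singular parts, minimize each affine coefficient over the vertices) completes the proof.
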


We prove in fact a stronger result: There exists $c>0$ such that for all $z\in\R^2$,
\begin{align*}
  z^TG(u)z \ge c\sum_{i=0}^2 z_i^2, \quad\mbox{where }z_0+z_1+z_2=0.
\end{align*}
Choosing $z_i=\na u_i$, this gives a gradient bound for $u_i$ for $i=0,1,2$. The proof is based on elementary, but tedious estimations; see Appendix \ref{app}. Generally, the matrix $h_B''(u)K(u)^{-1}A(u)$ is {\em not} positive definite; see the following counterexample.

\begin{example}[Counterexample to entropy structure]\rm
Let $n=2$, $q_{12}=q_{21}$, and $r_1=r_2=0$. The matrix $K(u)$ can be computed explicitly:
\begin{align*}
  K(u) = \begin{pmatrix}
  k_{01} + (k_{12}-k_{01})u_2 & -(k_{12}-k_{01})u_1 \\
  -(k_{12}-k_{02})u_2 & k_{02} + (k_{12}-k_{02})u_1
  \end{pmatrix}.
\end{align*}
The inverse of $K(u)$ equals
\begin{align}\label{4.K1}
  K(u)^{-1} &= \frac{1}{\kappa(u)}\begin{pmatrix}
  k_{02} + (k_{12}-k_{02})u_1 & (k_{12}-k_{01})u_1 \\
  (k_{12}-k_{02})u_2 & k_{01} + (k_{12}-k_{01})u_2
  \end{pmatrix},
\end{align}
where $\kappa(u) = k_{01}k_{02}u_0 + k_{01}k_{12}u_1 + k_{02}k_{12}u_2>0$ for $u\in\dom$. The matrix $A(u)$ equals
\begin{align}\label{4.Au}
  A(u) = \begin{pmatrix}
  2q_{11}u_1(1-u_1) + q_{12}u_2(1-2u_1) 
  & q_{12}u_1(1-2u_1) - 2q_{22}u_1u_2 \\
  q_{12}u_2(1-2u_2) - 2q_{11}u_1u_2 
  & 2q_{22}(1-u_2) + q_{12}u_1(1-2u_2)
  \end{pmatrix}.
\end{align}
We choose $q_{11}=q_{22}=1$, $q_{12}=10$, $k_{01}=k_{02}=1$, $k_{12}=10$. Then, with $G(u) := h_B''(u)K(u)^{-1}A(u)$,
\begin{align*}
  G(u) = \gamma(u)\begin{pmatrix}
  (9u_1^2 + (90u_2+1)u_1 + 5u_2)u_2 & (90u_1 + 9u_2 + 5)u_1u_2 \\
  (9u_1 + 90u_2 + 5)u_1u_2 & (9u_2^2 + (90u_1+1)u_2 + 5u_1)u_1
  \end{pmatrix},
\end{align*}
where $\gamma(u)=2/(1+9u_1+9u_2)$. The symmetric part of this matrix has a negative eigenvalue if, for instance, $u\in Q\cup Q^T$, where $Q=[0.1,0.25]\times[0.55,0.75]$. Thus, $h_B''(u)K(u)^{-1}A(u)$ is not positive definite on $\dom$.
\qed\end{example}


\section{Numerical experiments}\label{sec.num}

We present some numerical experiments in one space dimension.

\subsection{Implementation of the scheme}

We discretize equations \eqref{1.samek} with $n=2$ and $r_i=0$ in one space dimension using finite volumes in space and the implicit Euler scheme in time. The nonlinear discrete system is solved by Newton's method. Convergence is reached when the residual norm falls below the threshold $10^{-10}$. The Jacobian is assembled analytically and mildly regularized (by adding $\eps=10^{-12}$ to the diagonal); a backtracking line search is employed for robustness. To ensure the physical constraints $u_i\ge 0$ for $i=0,1,2$, we have implemented a projection onto the simplex $\dom$. We found in all experiments that the discrete solution stays within the set $\dom$ without requiring any projection step during the iterations.

We have used a uniform spatial mesh on $\Omega=(0,1)$ consisting of $N=600$ control volumes, corresponding to a spatial step size $\Delta x\approx 1.67\cdot 10^{-3}$. The time integration is performed up to the final time $T=6$ using a constant time step size $\Delta t=10^{-3}$. The initial data represent a smooth segregation of phases, and it is the same for all simulations:
\begin{align*}
  u_1^0(x) = C_0
  \bigg(1 + \tanh\bigg(\frac{x_0 - x}{\eta}\bigg) \bigg) + \eps_0, \quad
  u_2^0(x) = C_0 
  \bigg(1 - \tanh\bigg(\frac{x_0 - x}{\eta}\bigg) \bigg),
\end{align*}
where $C_0=0.5$, $x_0=0.1$, $\eta=0.05$, and $\eps_0=0.01$. We have computed the numerical convergence rate when $\Delta x\to 0$ (by comparing to a reference solution on a very fine mesh with 2500 cells), to verify our implementation. Figure \ref{fig.rate} shows that the rate for the $L^1(\Omega)$ error at final time is approximately 1.24 for the tumor-growth model, which is presented in the following.

\begin{figure}[htb]
\includegraphics[width=70mm]{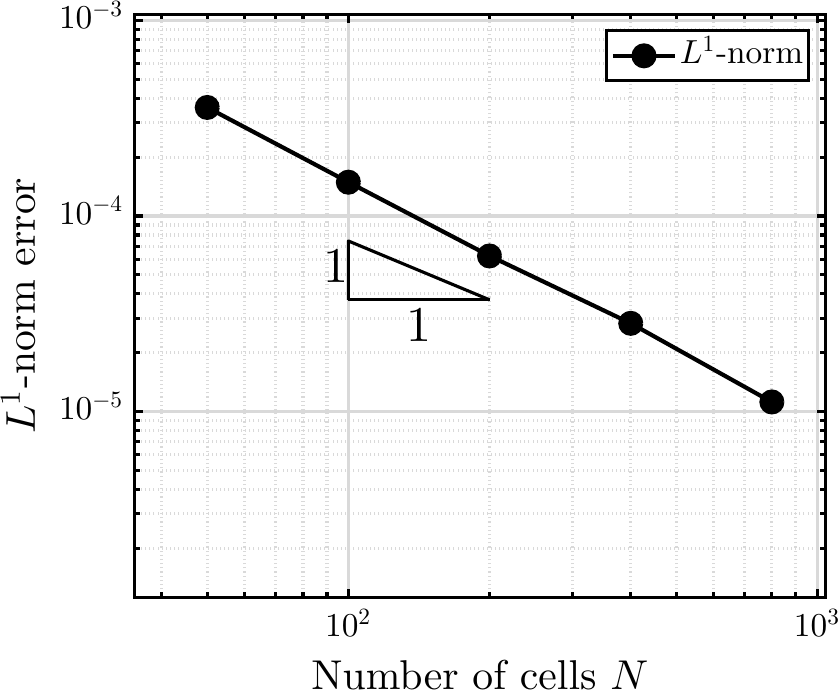}  
\caption{Convergence rate in the discrete $L^1$ norm for the tumor-growth model ($\beta=0.0015$, $\theta=100$).}
\label{fig.rate}
\end{figure}

\subsection{Tumor-growth model of Jackson and Byrne}

The choice
\begin{align*}
  q_{11} = \beta_c, \quad q_{12} = 0, \quad  
  q_{21} = \beta_m\theta u_2, \quad q_{22} = \beta_m
\end{align*}
correspond to the original model of \cite{JaBy02}. In the introduction, we have presented the special case $\beta:=\beta_c=\beta_m>0$. The entropy method is applicable if $\theta < \theta^* := 4\sqrt{\beta_c/\beta_m}$ \cite{JuSt12}. We choose $\beta_c=0.2$ and $\beta_m=0.0015$, which yields the threshold $\theta^*\approx 46.2$. Figure \ref{fig.tgm} shows the volume fractions of the tumor cells and the extracellular matrix (ECM) at various times for the subcritical parameter $\theta=30$ and the supercritical value $\theta=1000$. We observe peaks in the ECM, which move in time from left to right. This means that the tumor cells penetrate the surrounding ECM. The numerical results coincide with those from \cite[Sec.~5]{JuSt12}, confirming our numerical implementation.

\begin{figure}[htb]
\includegraphics[width=120mm]{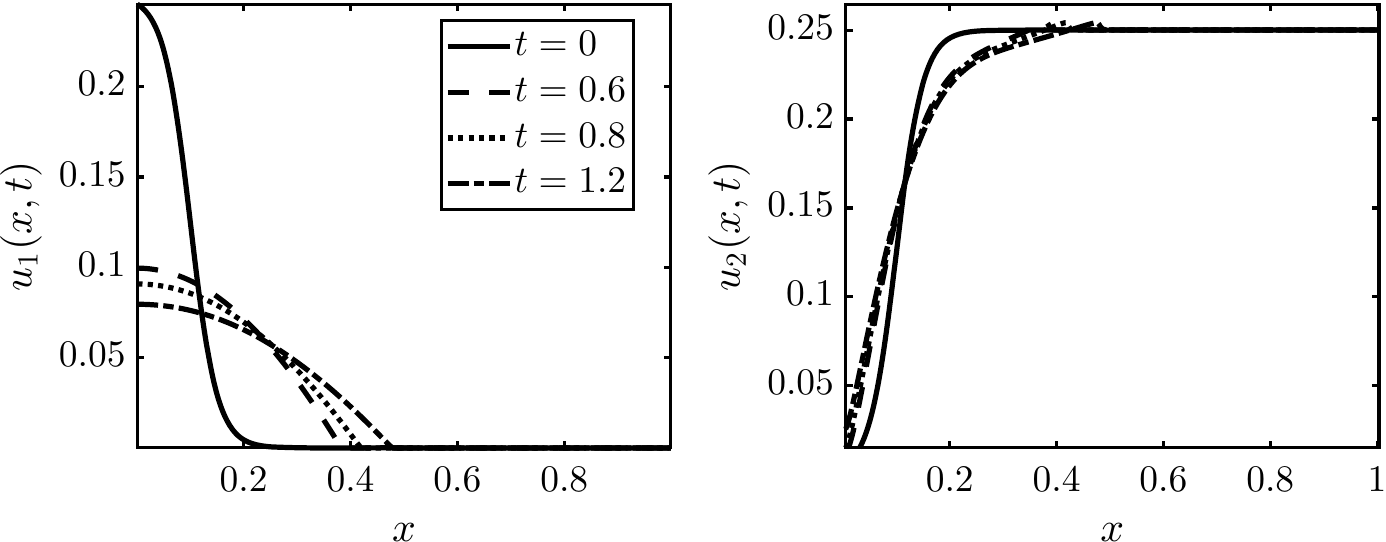} 
\includegraphics[width=120mm]{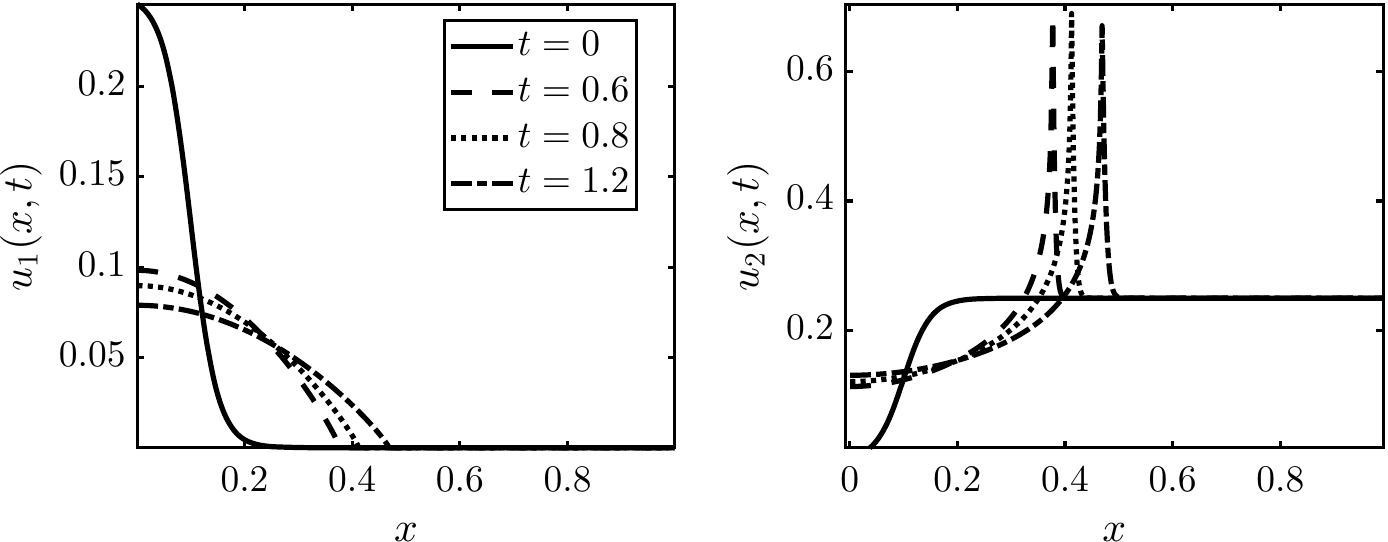} 
\caption{Volume fractions of the tumor cells (left) and the ECM (right) using $\theta=30$ (top) and $\theta=1000$ (bottom). The tumor cell front and the ECM peaks move from left to right as time increases.}
\label{fig.tgm}
\end{figure}

Figure \ref{fig.tgm2} shows the behavior of the volume fractions when the pressure parameters $\beta=\beta_c=\beta_m$ are equal. We have set $\beta=1$ and $\theta=100>\theta^*=4/\sqrt{\beta}=4$. In contrast to the previous case, the ECM fraction stays increased after the sharp rise. The relative entropy still decreases with time (Figure \ref{fig.ent}), although this can be proven only for the subcritical case $\theta<4$. We observe a linear decay and there seems to be a time interval for which the relative entropy is concave. In the subcritical case $\theta<4$, Theorem \ref{thm.decay} ensures exponential decay. 

\begin{figure}[htb]
\includegraphics[width=120mm]{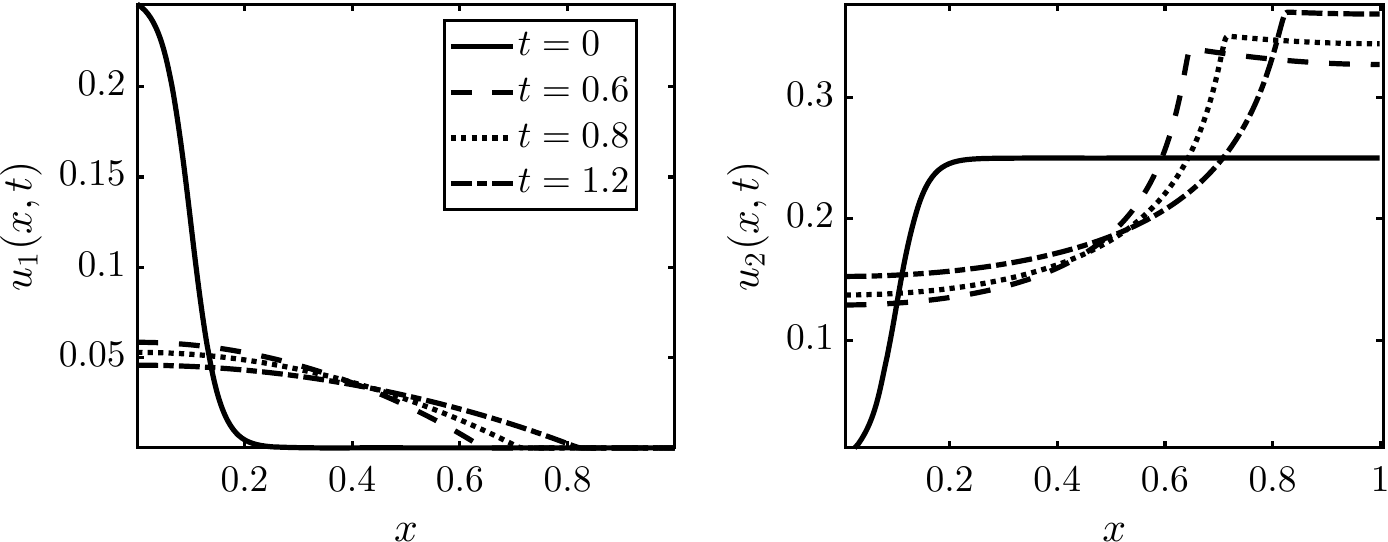}
\caption{Volume fractions of the tumor cells (left) and the ECM (right) using the symmetric values $\beta_c=\beta_m=1$ and the supercritical parameter $\theta=100$.}
\label{fig.tgm2}
\end{figure}

\begin{figure}[htb]
\includegraphics[width=120mm]{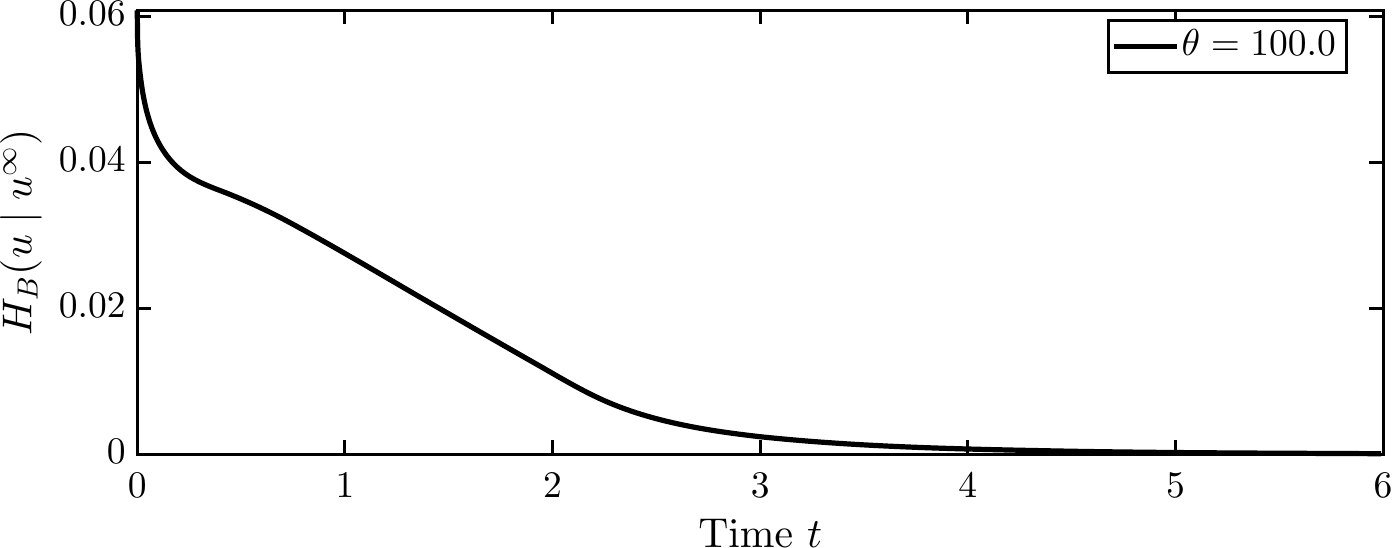}
\caption{Relative entropy for the tumor-growth model versus time using $\beta_c=\beta_m=1$ and $\theta=100$.}
\label{fig.ent}
\end{figure}

\subsection{Multiphase model}

We present some simulations for the volume-filling model \eqref{1.vfSKT} with $n=2$ and 
\begin{align*}
  q_1(u) = u_1(1+\theta_1 u_2), \quad 
  q_2(u) = u_2(1+\theta_2 u_1),
\end{align*}
where $\theta_1$, $\theta_2>0$. If $\theta_1=0$ and $\theta=\theta_2$, we recover the tumor-growth model \eqref{1.tg} with $\beta=1$. We infer the existence of weak solutions from Theorem \ref{thm.ex} if $(q_{ij})$ is positive definite or $\theta_1+\theta_2<2$. Figure \ref{fig.mp} shows the behavior of the volume fractions for $\theta_1=1$ and two values of $\theta_2$. In both cases, the ECM fraction becomes smaller near $x=0$ because of the presence of the tumor cells, and increases up to some point. When the parameter $\theta_2$ is large, we observe a behavior of the ECM fraction similar as in Figure \ref{fig.tgm2}. As in the previous example, the relative entropy decreases monotonically but not exponentially (see Figure \ref{fig.relent}). Moreover, larger values of $\theta_1$ lead to a faster decay, showing a pronounced concave region.

\begin{figure}[htb]
\includegraphics[width=120mm]{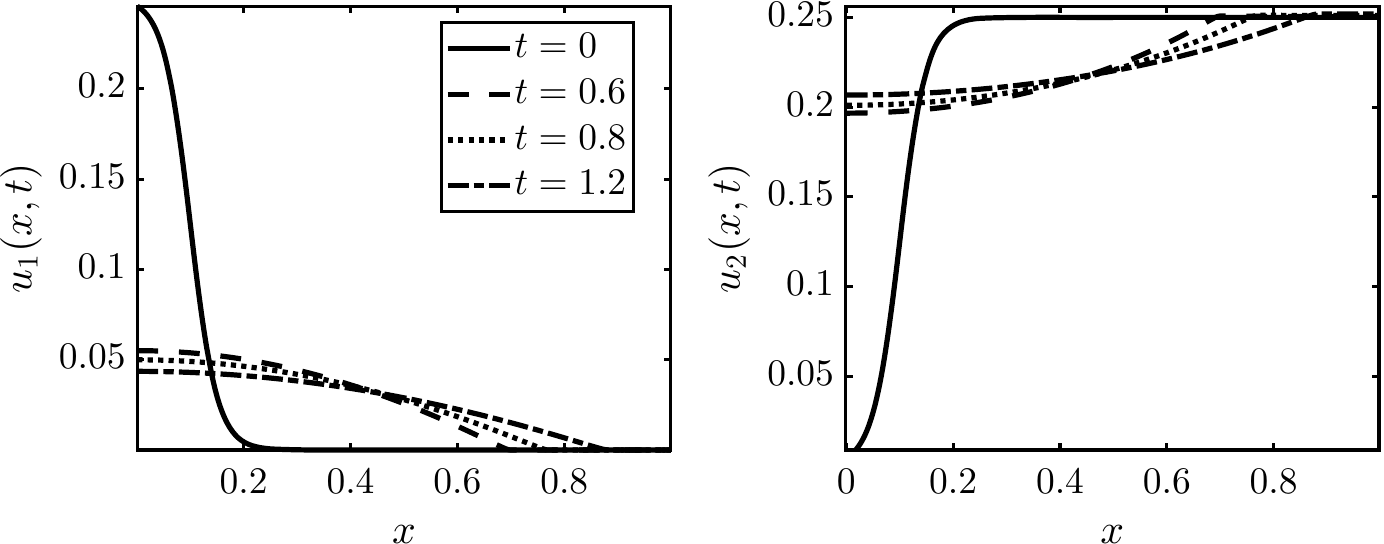}
\includegraphics[width=120mm]{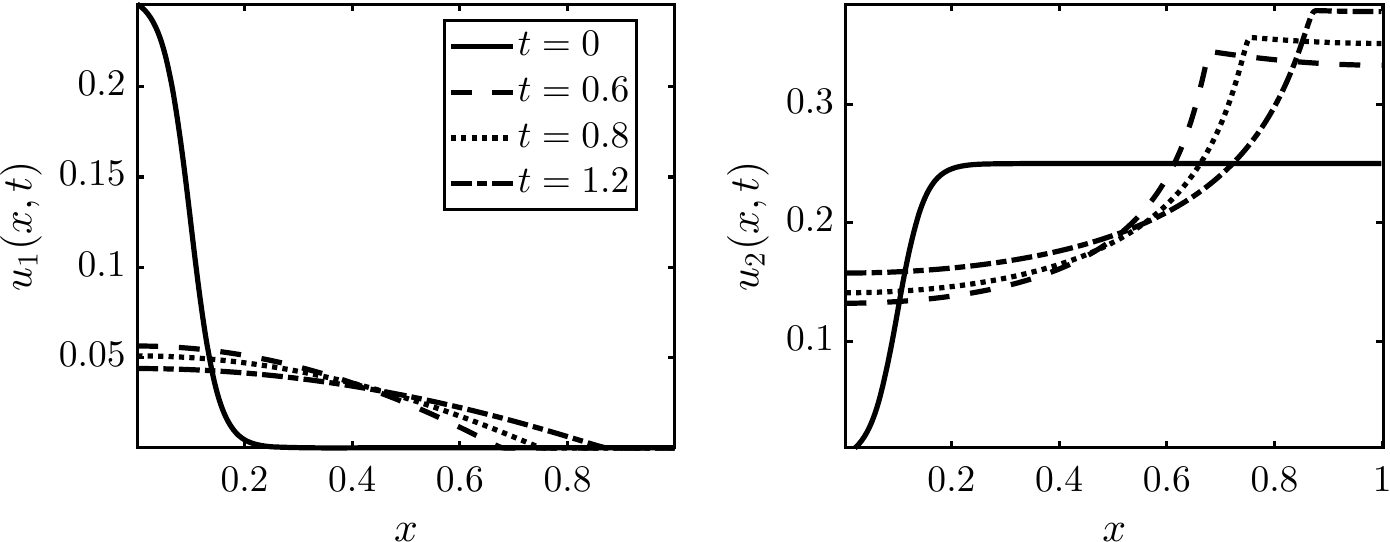}
\caption{Volume fractions of the multiphase model \eqref{1.vfSKT} using $\theta_1=1$ and $\theta_2=10$ (top) as well as $\theta_2=100$ (bottom).}
\label{fig.mp}
\end{figure}

\begin{figure}[htb]
\includegraphics[width=110mm]{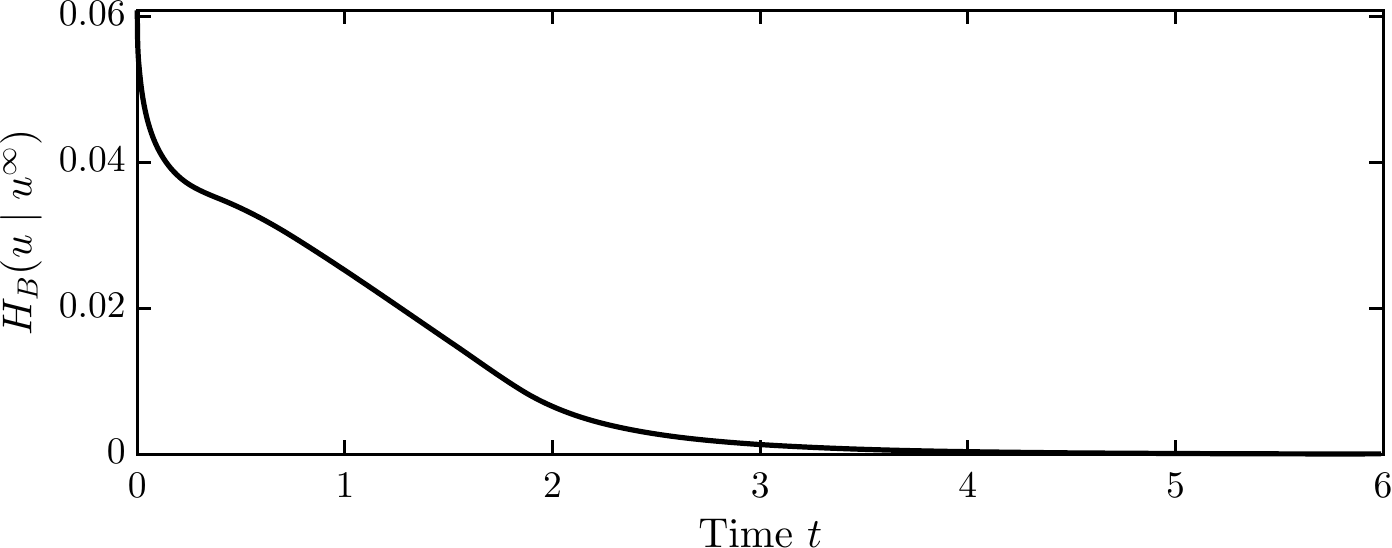}
\includegraphics[width=110mm]{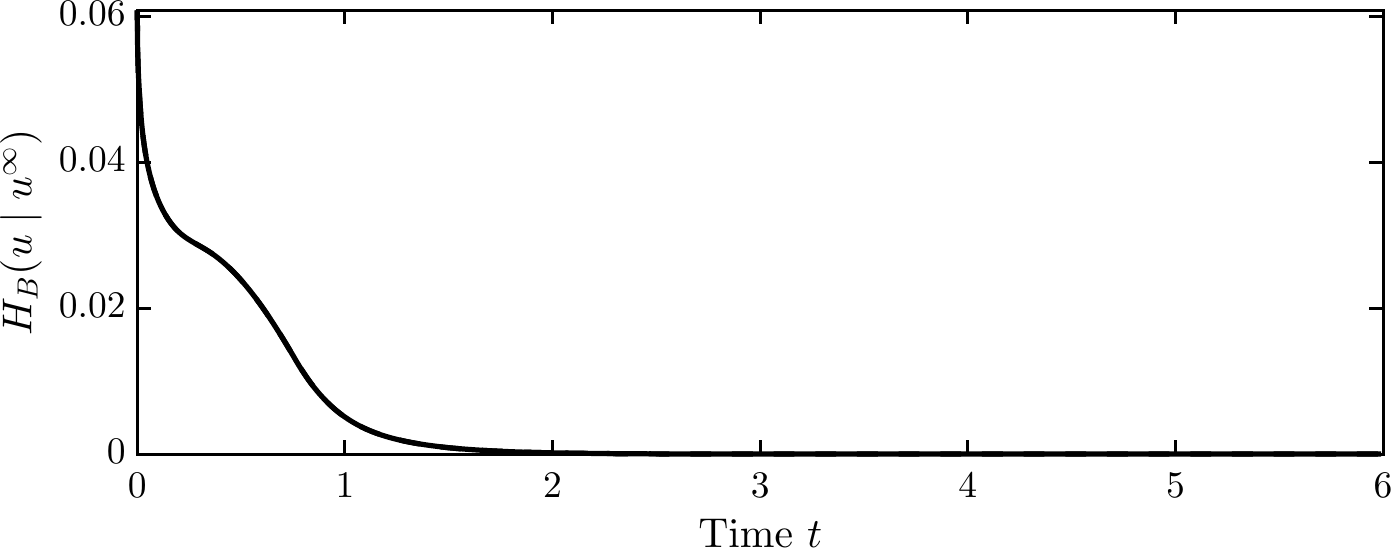}
\caption{Relative entropy for the multiphase model versus time using $\theta_1=1$, $\theta_2=100$ (top) and $\theta_1=10$, $\theta_2=100$ (bottom).}
\label{fig.relent}
\end{figure}


\begin{appendix}
\section{Proof of Proposition \ref{prop.G}}\label{app}

We compute the matrix $G(u)=h_B''(u)K(u)^{-1}A(u)$, using $h''_B(u)=(H_{ij})$ with $H_{ij}=\delta_{ij}/u_i+1/u_0$ for $i,j=1,2$, 
expression \eqref{4.K1} of $K(u)^{-1}$, and expression \eqref{4.Au} of $A(u)$. A tedious computation yields for $z\in\R^2$:
\begin{align*}
  & z^TG(u)z = a(u)z_1^2 + b(u)z_2^2 + c(u)(z_1+z_2)^2, 
  \quad\mbox{where} \\
  & a(u) = a_{11}(u)q_{11} + a_{12}(u)q_{12} + a_{21}(u)q_{21}
  + a_{22}(u)q_{22}, \\
  & b(u) = b_{11}(u)q_{11} + b_{12}(u)q_{12} + b_{21}(u)q_{21}
  + b_{22}(u)q_{22}, \\
  & c(u) = c_{11}(u)q_{11} + c_{12}(u)q_{12} + c_{21}(u)q_{21}
  + c_{22}(u)q_{22},
\end{align*} 
and the coefficients equal
\begin{align*}
  a_{11}(u) &= (k_{01} + k_{12} - 2 k_{02})u_1 + 2 k_{02}, \\
  a_{12}(u) &= k_{12}u_2 - \frac{1}{2}\big(k_{12}(u_1 + u_2)
  + k_{02}(u_0 + u_2) - k_{01} u_2\big) 
  + k_{02}u_2 \frac{u_0 + u_2}{u_1}, \\ 
  a_{21}(u) &= -\frac12\big(k_{01}(u_0+u_1) - k_{02}(u1-2u_2)
  + k_{12}(u_1-u_2)\big), \\
  a_{22}(u) &= (k_{02} - k_{12})u_2, \\
  b_{11}(u) &= (k_{01} - k_{12})u_1, \\
  b_{12}(u) &= -\frac{k_{02}}{2} + \frac{u_1}{2}(-2k_{01} 
  + k_{02} + k_{12}) + \frac{u_2}{2}(k_{01} - k_{12}), \\
  b_{21}(u) &= k_{01}u_1\frac{u_0+u_1}{u_2} - \frac12\big(
  k_{01}(u_0+u_1) - (k_{02}+k_{12})u_1 + k_{12}u_2\big), \\
  b_{22}(u) &= (k_{02}+k_{12}-2k_{01})u_2 + 2k_{01}, \\
  c_{11}(u) &= (k_{12}-k_{01})q_{11}u_1, \\
  c_{12}(u) &= \frac12\big(k_{12}(u_1+u_2) + k_{02}(u_0+u_2) 
  - k_{01}u_2\big)q_{12}, \\
  c_{21}(u) &= \frac12\big((k_{12}-k_{02})u_1 + k_{12}u_2
  + k_{01}(u_0+u_1)\big)q_{21}, \\
  c_{22}(u) &= (k_{12}-k_{02})u_2q_{22}.
\end{align*}
We determine conditions on $k_{ij}$ and $q_{ij}$ such that $a>0$, $b>0$, and $c>0$ uniformly in $u\in\dom$. Since the computations are similar for the different cases, we only present the full proof for the first case: $k_{01}<k_{12}<k_{02}$ and $k_{02}\le k_{01}+k_{12}$.

First, we show that $a(u)>0$ under the conditions stated in the proposition. The coefficient $a_{11}(u)$ is affine and decreasing in $u_1$ since $k_{01}+k_{12}-2k_{02}<0$. Hence, $a_{11}(u)\ge a_{11}|_{u_1=1} = k_{01}+k_{12}$. Next, we obtain
\begin{align*}
  a_{12} \ge u_2 k_{12} - \frac{1}{2}\big(k_{12}(u_1 + u_2)
  + k_{02}(u_0 + u_2) - k_{01} u_2\big) =: \beta(u).
\end{align*}
The function $\beta(u)$ is affine in $u$, so the minimum is attained at one of the vertices of $\dom$. A computation shows that $\beta(u)\ge\beta(0,1)=-k_{02}/2$. This shows that $a_{12}(u)\ge -k_{02}/2$. The coefficient $a_{21}(u)$ is affine. Thus, the minimum is attained at one of the vertices, giving $a_{21}(u)\ge a_{21}(0,1) = -k_{02}+k_{12}/2$. Since $k_{02}>k_{12}$, we have $a_{22}\ge 0$. Putting these conditions together and taking into account \eqref{4.case1}, we arrive at
\begin{align}\label{a}
  a(u) \ge (k_{01}+k_{12})q_{11} - \frac12 k_{02}q_{12}
  - \bigg(k_{02}-\frac{k_{12}}{2}\bigg)q_{21} > 0.
\end{align}

Second, we verify $b(u)>0$. We infer from $k_{01}<k_{12}$ and $u_1\le 1$ that $b_{11}(u)\ge -(k_{12}-k_{01})$. As $b_{12}$ is affine, the minimum is attained at one of the vertices of $\dom$, and we find that $b_{12}(u)\ge -\frac12(k_{02}+k_{12})$. We neglect the first term in $b_{21}$ and estimate the second term, leading to $b_{21}(u)\ge -\frac12(k_{01}+k_{12})$. The function $b_{22}$ is increasing in $u_2$, showing that $b_{22}(u)\ge 2k_{01}$. We have proved that
\begin{align}\label{b}
  b(u)\ge 2k_{01}q_{22} - (k_{12}-k_{01})q_{11}
  - \frac12(k_{02}+k_{12})q_{12} - \frac12(k_{01}+k_{12})q_{21} > 0.
\end{align}

Finally, we prove that $c(u)>0$. Since $k_{12}>k_{01}$, we have $c_{11}(u)\ge 0$. The minimum of the affine function $c_{12}$ equals $k_{12}q_{12}/2$, while the minimum of $c_{21}$ is $(k_{01}+k_{12}-k_{02})q_{21}/2$. Finally, using $k_{12}-k_{02}<0$, we have $c_{22}(u)\ge (k_{12}-k_{02})q_{22}$. This shows that
\begin{align}\label{c}
  c(u) \ge \frac12 k_{12}q_{12} + \frac12(k_{01}+k_{12}-k_{02})q_{21}
  - (k_{02}-k_{12})q_{22} > 0. 
\end{align}
Summarizing inequalities \eqref{a}--\eqref{c}, we conclude the positive definiteness of $G(u)$. The other cases are proved by similar arguments.
\end{appendix}



\begin{thebibliography}{11}

\bibitem{AbLe25} E.~Abdo and F.-N.~Lee. Logarithmic Sobolev inequalities for bounded domains and applications to drift-diffusion equations. {\em J. Funct. Anal.} 288 (2025), no.~110716, 13 pages.

\bibitem{AlLu83} H.-W.~Alt and S.~Luckhaus. Quasilinear elliptic–parabolic differential equations. {\em Math. Z.} 183 (1983), 311--341.

\bibitem{Ama90} H.~Amann. Dynamic theory of quasilinear parabolic equations II. Reaction--diffusion equations. {\em Differ. Int.
Eqs.} 3 (1990), 13--75.

\bibitem{BaEh18} A.~Bakhta and V.~Ehrlacher. Cross-diffusion systems with non-zero flux and moving boundary conditions. {\em ESAIM Math. Model. Numer. Anal.} 52 (2018), 1385--1415.

\bibitem{Bot11} D.~Bothe. On the Maxwell--Stefan equations to multicomponent diffusion. In: J.~Escher et al.\ (eds), {\em Progress in Nonlinear Differential Equations and their Applications}, vol.~80, pp.~81--93. Springer, Basel, 2011.

\bibitem{BuTr83} S.~Busenberg and C.~Travis. Epidemic models with spatial spread due to population migration. {\em J. Math. Biol.} 16 (1983), 181--198.

\bibitem{CCDJ25} J.~A.~Carrillo, X.~Chen, B.~Du, and A.~J\"ungel. Fluid relaxation approximation of the Busenberg--Travis cross-diffusion system. {\em Commun. Math. Phys.} 406 (2025), no.~151, 29 pages.

\bibitem{ChJu18} X.~Chen and A.~J\"ungel. A note on the uniqueness of weak solutions to a class of cross-diffusion systems. {\em J. Evol. Eqs.} 18 (2018), 805--820.

\bibitem{ChJu21} X.~Chen and A.~J\"ungel. When do cross-diffusion systems have an entropy  structure? {\em J. Differ. Eqs.} 278 (2021), 60--72.

\bibitem{GoMa98} V.~Giovangigli and M.~Massot. The local Cauchy problem for multicomponent reactive flows in full vibrational nonequilibrium. {\em Math. Meth. Appl. Sci.} 21 (1998), 1415--1439.

\bibitem{HeJu25} M.~Heitzinger and A.~J\"ungel. Weak--strong uniqueness for general cross-diffusion systems with volume filling. Submitted for publication, 2025. arXiv:2509.25978.


\bibitem{HJT22} X.~Huo, A.~J\"ungel, and A.~Tzavaras. Weak--strong uniqueness for Maxwell--Stefan systems. {\em SIAM J. Math. Anal.} 54 (2022), 3215--3252.

\bibitem{JaBy02} T.~Jackson and H.~Byrne. A mechanical model of tumor encapsulation and transcapsular spread. {\em Math. Biosci.} 180 (2002), 307--328.

\bibitem{Jue15} A.~J\"ungel. The boundedness-by-entropy method for cross-diffusion systems. {\em Nonlinearity} 28 (2015), 1963--2001.

\bibitem{Jue16} A.~J\"ungel. {\em Entropy Methods for Diffusive Partial Differential Equations}. BCAM Springer Briefs, Cham, 2016.

\bibitem{JuSt12} A.~J\"ungel and I.~Stelzer. Entropy structure of a cross-diffusion tumor-growth model. {\em Math. Models Meth. Appl. Sci.} 22 (2012), no.~1250009, 26 pages.

\bibitem{JuSt13} A.~J\"ungel and I.~Stelzer. Existence analysis of Maxwell-Stefan systems for multicomponent mixtures. {\em SIAM J. Math. Anal.} 45 (2013), 2421--2440.

\bibitem{JuZa16} A.~J\"ungel and N.~Zamponi. Qualitative behavior of solutions to cross-diffusion systems from population dynamics. {\em J. Math. Anal. Appl.} 440 (2016), 794--809.

\bibitem{LKBJS06} G.~Lemon, J.~King, H.~Byrne, O.~Jensen, and K.~Shakesheff. Mathematical modelling of engineered tissue growth using a multiphase porous flow mixture theory. {\em J. Math. Biol.} 52 (2006), 571--594.

\bibitem{KSZ21} P.~Kumar, C.~Surulescu, and A.~Zhigun. Multiphase modelling of glioma pseudopalisading under acidosis. {\em Math. Engin.} 4 (2021), 1--28.

\bibitem{LaMa23} P.~Lauren\c{c}ot and B.-V.~Matioc. Bounded weak solutions to a class of degenerate cross-diffusion systems. {\em Ann. H. Lebesgue} 6 (2023), 847--874.

\bibitem{LOK22} X.~Liu, S.~Oh, and M.~Kirschner. The uniformity and stability of cellular mass density in mammalian cell culture. {\em Front. Cell Dev. Biol.} 10 (2022), no.~1017499, 24 pages.

\bibitem{SKT79} N.~Shigesada, K.~Kawasaki, and E.~Teramoto. Spatial segregation of interacting species. {\em J. Theor. Biol.} (1979), 83--99.

\bibitem{WeKr00} J.~Wesselingh and R.~Krishna. {\em Mass Transfer in Multicomponent Mixtures}. Delft University Press, Delft, 2000.

\end{thebibliography}
\end{document}